\documentclass[final,3p,times]{elsarticle}
\usepackage{amssymb}
\usepackage{amsmath}
\usepackage[caption=false,font=footnotesize]{subfig}
\usepackage{amsfonts}
\usepackage{graphicx}
 \graphicspath{{./pics/}}
 \DeclareGraphicsExtensions{.pdf,.jpg}
\usepackage{tikz}
 \usetikzlibrary{arrows,automata,shapes}
 \usetikzlibrary{positioning}
 \usetikzlibrary{intersections}
\usepackage[textwidth=1.9cm,textsize=footnotesize]{todonotes}

\newtheorem{theorem}{Theorem}
\newtheorem{lemma}[theorem]{Lemma}
\newtheorem{cor}[theorem]{Corollary}
\newtheorem{proposition}[theorem]{Proposition}
\newdefinition{definition}[theorem]{Definition}
\newtheorem{algorithm}[theorem]{Algorithm}
\newdefinition{problem}[theorem]{Problem}
\newdefinition{example}[theorem]{Example}
\newdefinition{remark}[theorem]{Remark}
\newproof{proof}{Proof}

\newcommand{\eps}{\varepsilon}
\newcommand{\supC}{\mbox{$\sup {\rm C}$}}
\newcommand{\supCC}{\mbox{$\sup {\rm cC}$}}

\newcommand{\supcn}{\mbox{$\sup {\rm CN}$}}
\newcommand{\supCN}{\mbox{$\sup {\rm CN}$}}
\newcommand{\supccn}{\mbox{$\sup {\rm cCN}$}}
\newcommand{\supCCN}{\mbox{$\sup {\rm cCN}$}}

\overfullrule 5pt
\widowpenalty=10000
\clubpenalty =10000
\interlinepenalty=10

\journal{arxiv.org}

\begin{document}

\begin{frontmatter}

\title{A Relaxed Framework for Coordination Control\\ of Discrete-Event Systems}

\author[inst]{Jan~Komenda}
 \ead{komenda@ipm.cz}
 \address[inst]{Institute of Mathematics, Czech Academy of Sciences, {\v Z}i{\v z}kova 22, 616 62 Brno, Czech Republic}
\author[inst,tud]{Tom{\' a}{\v s}~Masopust}
 \ead{masopust@math.cas.cz}
 \address[tud]{Technische Universit\"at Dresden, Germany}
\author[vSCR]{Jan H. van Schuppen}
 \ead{jan.h.van.schuppen@xs4all.nl}
 \address[vSCR]{Van Schuppen Control Research,
  Gouden Leeuw 143,
  1103 KB Amsterdam,
  The Netherlands}

\begin{abstract}
  In this paper, we simplify the coordination control approach by removing the supervisor for the coordinator from the closed-loop system and relax the restrictions placed on a coordinator. This relaxation results in the simplification of the whole coordination control framework, including the notions of conditional controllability, conditional observability, and conditional normality. Compared to our previous work, the role of the supervisor on a coordinator alphabet is postponed until the final stage of the coordination control synthesis. This completes and clarifies our previous results, while all the fundamental theorems remain valid in the relaxed framework. Unlike previous approaches we can  always compute a conditionally controllable sublanguage without any restricting conditions we have used before.
\end{abstract}
\begin{keyword}
  Discrete-event system \sep supervisory control \sep coordination control \sep coordinator \sep conditional controllability \sep conditional observability \sep conditional normality 
  \MSC 93C65 \sep 93A99 \sep 93B50
\end{keyword}
\end{frontmatter}

\section{Introduction}
  In this paper, we further investigate supervisory control of concurrent discrete-event systems (DES) with a coordinator. Discrete-event systems modeled as finite automata have been widely studied by P. J. Ramadge and W. M. Wonham \cite{RW87}. Large DES are typically formed in a compositional way as products of local subsystems (smaller finite automata) that communicate with each other in a synchronous~\cite{CL08} or asynchronous~\cite{LauriePhilippe} way. Such systems are often called modular DES.

  Supervisory control theory aims to guarantee that the control specifications consisting of safety and of nonblockingess are satisfied in the controlled (closed-loop) system. Safety means that the language of the closed-loop system is included in a prescribed specification language, and nonblockingness means that all generated strings can always be completed to a marked string. Supervisory control is realized by a supervisor that runs in parallel with the system and imposes the specification by disabling, at each state, some of the controllable events in a feedback manner. Since only controllable specification languages can be exactly achieved, one of the key issues in the supervisory control synthesis is a computation of the supremal controllable sublanguage of the specification language, from which the supervisor can be constructed.
 
  Unfortunately, the number of states of a modular DES grows exponentially with respect to the number of components, which limits the applicability of the centralized supervisory control synthesis to relatively small systems. Moreover, the safety specification is considered as a global property that is independent on the product structure of the system. The purely decentralized control (an independent construction of a supervisor for each subsystem) is only applicable for a local (decomposable) specification. Therefore, in~\cite{KvS08} and~\cite{automatica2011}, see also~\cite{JDEDS}, we have proposed a coordination control framework with local supervisors communicating with each other via a coordinator. The framework is based on new concepts of conditional decomposability and conditional controllability. Conditional controllability of the specification is proven to be an equivalent condition for the existence of local supervisors that can achieve the specification in cooperation with the coordinator. 

  In this paper, the basic framework of coordination control of modular DES from~\cite{KvS08} and~\cite{JDEDS} is further simplified. Our original framework, where the closed-loop for the coordinator part of the specification is added to the plant of local supervisors, has been motivated by the antimonotonicity of the basic supervisory control operators (supremal controllable and normal sublanguages) with respect to the plant. More precisely, if the specification language is fixed, then decreasing the plant language leads to increasing the supervisors given by supremal controllable and normal sublanguages, hence permissiveness is potentially increased. However, it turns out that in the original framework of coordination control as first introduced in~\cite{KvS08}, decreasing the plant $G_i\parallel G_k$ for local supervisors by replacing $G_k$ by, in general, a smaller closed-loop $S_k/G_k$ does not change the permissiveness of local closed-loops $S_{i+k}/[G_i \parallel (S_k/G_k)]$, because of the transitivity of controllability (note that closed-loop $S_k/G_k$ is always controllable with respect to $G_k$). More precisely, a language $M\subseteq G_i \parallel (S_k/G_k)$ is controllable with respect to $G_i \parallel (S_k/G_k)$ if and only if it is controllable with respect to  $G_i \parallel G_k$. For this reason, the supervisor for the coordinator does not help and it is replaced by a supervisor on the coordinator alphabet at the very end of the coordination control algorithm. Conditional controllability, conditional observability, and conditional normality are then correspondingly relaxed by dropping the requirements on controllability, observability, and normality on the coordinator alphabet, respectively. 
 
  Our latest results show that although controllability of the resulting closed-loop on the coordinator alphabet is a key condition for the ability to compute the supremal conditionally controllable sublanguage, this controllability can actually always be guaranteed by constructing a supervisor that restricts the resulting coordination control closed-loop so that it becomes controllable on the coordinator alphabet. It should be noted that without this final restriction the computed language would be larger and only controllable, but not conditional controllable in general. Conditional controllability is, however, a necessary and sufficient condition on a language to be achievable in the coordination control architecture using coordinator and local supervisors. This is true both in our previous architecture with the original definition of conditional controllability and in the simplified architecture of this paper and the relaxed definition of conditional controllability. 
 
  Due to these simplifications the complexity of coordination control synthesis is reduced, and we can always compute a conditionally controllable sublanguage of the specification in a distributed way.

  The paper is organized as follows. In section~\ref{sec:preliminaries}, the necessary notation is introduced and the basic elements of supervisory control theory are recalled. The new relaxed framework of coordination control is presented in Section~\ref{sec:controlsynthesis}, where the simplified notion of conditional controllability (Definition~\ref{def:conditionalcontrollability}) and conditional observability (Definition~\ref{def:conditionalobservability}) are proposed. Section~\ref{sec:procedure} is devoted to the main results of the paper, namely to the distributed computation of supremal conditionally controllable and conditionally normal sublanguages. It discusses the computational benefit of the new coordination control framework, and presents an illustrative example. Finally, concluding remarks are given in Section~\ref{sec:conclusion}.

\section{Preliminaries and definitions}\label{sec:preliminaries}
  Let $\Sigma$ be a finite nonempty set of events (called an {\em alphabet}), and let $\Sigma^*$ denote the set of all finite words over the alphabet $\Sigma$. The empty word is denoted by $\eps$. A {\em language\/} over an alphabet $\Sigma$ is a subset of $\Sigma^*$. The prefix closure of a language $L$ over the alphabet $\Sigma$ is the set of all prefixes of all words from $L$, that is, $\overline{L}=\{w\in \Sigma^* \mid \text{there exists } u \in\Sigma^* \text{ such that } wu\in L\}$. A language $L$ is {\em prefix-closed\/} if $L=\overline{L}$. For more details, the reader is referred to~\cite{CL08,Won04}. Discrete-event systems (DES) are formed as synchronous products of generators. 
 
  \paragraph{Generators}
  A {\em generator\/} of a DES is the structure $G=(Q,\Sigma, f, q_0, Q_m)$, where $Q$ is the finite set of states, $\Sigma$ is the finite nonempty set of events (an alphabet of the generator $G$), $f: Q \times \Sigma \to Q$ is the partial transition function, $q_0 \in Q$ is the initial state, and $Q_m\subseteq Q$ is the set of marked states. The transition function $f$ can be extended to the domain $Q \times \Sigma^*$ in the usual way by induction. The behavior of the generator $G$ is described in terms of languages. The language {\em generated\/} by $G$ is the set $L(G) = \{s\in \Sigma^* \mid f(q_0,s)\in Q\}$, and the language {\em marked\/} by $G$ is the set $L_m(G) = \{s\in \Sigma^* \mid f(q_0,s)\in Q_m\}$. Obviously, $L_m(G)\subseteq L(G)$.

  \paragraph{Natural projections}
  For $\Sigma_0\subseteq \Sigma$, a {\em (natural) projection\/} is a mapping $P: \Sigma^* \to \Sigma_0^*$, which deletes from any word all letters that belong to $\Sigma\setminus \Sigma_0$. Formally, it is a homomorphism defined by $P(a)=\eps$, for $a$ in $\Sigma\setminus \Sigma_0$, and $P(a)=a$, for $a$ in $\Sigma_0$. It is then extended (as a homomorphism for concatenation) from letters to words by induction. The {\em inverse image\/} of $P$ is denoted by $P^{-1}:\Sigma_0^* \to 2^{\Sigma^*}$. For three event sets $\Sigma_i$, $\Sigma_j$, $\Sigma_\ell$, subsets of $\Sigma$, the notation $P^{i+j}_{\ell}$ is used to denote the projection from $(\Sigma_i\cup \Sigma_j)^*$ to $\Sigma_\ell^*$. If $\Sigma_i\cup \Sigma_j=\Sigma$, we simplify the notation to $P_\ell$. Similarly, the notation $P_{i+k}$ stands for the projection from $\Sigma^*$ to $(\Sigma_i\cup\Sigma_k)^*$. The projection of a generator $G$, denoted by $P(G)$, is a generator whose behavior satisfies $L(P(G))=P(L(G))$ and $L_m(P(G))=P(L_m(G))$. It is defined using the standard subset construction, cf.~\cite{CL08}.
 
  \paragraph{Synchronous product and nonconflictingness}
  The synchronous product of languages $L_1$ over $\Sigma_1$ and $L_2$ over $\Sigma_2$ is defined as the language $L_1\parallel L_2=P_1^{-1}(L_1) \cap P_2^{-1}(L_2)$, where $P_i: (\Sigma_1\cup \Sigma_2)^*\to \Sigma_i^*$ is a projection, for $i=1,2$. A similar definition for generators can be found in~\cite{CL08}. For generators $G_1$ and $G_2$, it holds that $L(G_1 \parallel G_2) = L(G_1) \parallel L(G_2)$ and $L_m(G_1 \parallel G_2)= L_m(G_1) \parallel L_m(G_2)$. 
  
  Languages $K$ and $L$ are {\em synchronously nonconflicting\/} if $\overline{K \parallel L} = \overline{K} \parallel \overline{L}$. 
  
  \paragraph{Basic supervisory control theory}
  Now we recall the basic elements of supervisory control theory. A {\em controlled generator\/} over an alphabet $\Sigma$ is a structure $(G,\Sigma_c,P,\Gamma)$, where $G$ is a generator over the alphabet $\Sigma$, $\Sigma_c\subseteq\Sigma$ is a set of {\em controllable events}, $\Sigma_{u} = \Sigma \setminus \Sigma_c$ is the set of {\em uncontrollable events}, $P:\Sigma^*\to \Sigma_o^*$ is the natural projection from $\Sigma$ to the set of {\em observable events\/} $\Sigma_o$, and $\Gamma = \{\gamma \subseteq \Sigma \mid \Sigma_{u} \subseteq \gamma\}$ is the {\em set of control patterns}. A {\em supervisor\/} for the controlled generator $(G,\Sigma_c,P,\Gamma)$ is a map $S:P(L(G)) \to \Gamma$. The {\em closed-loop system\/} associated with the controlled generator $(G,\Sigma_c,P,\Gamma)$ and the supervisor $S$ is defined as the smallest language $L(S/G)$ such that $\eps \in L(S/G)$, and if $s \in L(S/G)$, $sa\in L(G)$, and $a \in S(P(s))$, then $sa \in L(S/G)$. The marked language of the closed-loop system is defined as the set of marked strings of the uncontrolled generator that survive under supervision, that is, $L_m(S/G) = L(S/G)\cap L_m(G)$. The intuition is that the supervisor disables some of the controllable transitions of the generator $G$, but it never disables an uncontrollable transition. If $\overline{L_m(S/G)}=L(S/G)$, then the supervisor $S$ is called {\em nonblocking}. In the automata framework, where a supervisor $S$ has a finite representation as a generator, the closed-loop system is a synchronous product of the supervisor and the plant. Thus, we can write the closed-loop system as $L(S/G)=L(S) \parallel L(G)$.

  \paragraph{Control objectives of supervisory control}
  Control objectives of supervisory control are defined using a specification language $K$, and the goal of supervisory control is to find a nonblocking supervisor $S$ such that $L_m(S/G)=K$. In the monolithic case, that is, if the plant $G$ is a single generator, such a supervisor exists if and only if $K$ is {\em controllable\/} with respect to $L(G)$ and $\Sigma_u$ (that is, $\overline{K}\Sigma_u\cap L\subseteq \overline{K}$), {\em $L_m(G)$-closed} (that is, $K = \overline{K}\cap L_m(G)$), and {\em observable\/} with respect to $L(G)$, $\Sigma_o$, and $\Sigma_c$ (that is, for all $s\in \overline{K}$ and for all $\sigma \in \Sigma_c$, $(s\sigma \notin \overline{K})$ and $(s\sigma \in L(G))$ implies that $P^{-1}[P(s)]\sigma \cap \overline{K} = \emptyset$), cf.~\cite{CL08}. If $\Sigma_o=\Sigma$, we say that the system is with {\em full observation}; otherwise, it is with {\em partial observation}.

  If the specification fails to satisfy controllability or observability, controllable and observable sublanguages of the specification are considered instead. However, since, in general, supremal observable sublanguages do not exist, {\em normality\/} is used instead of observability. A language $K\subseteq L(G)$ is {\em normal\/} with respect to plant $G$ and the partial observation $P$ if $\overline{K} = P^{-1}[P(\overline{K})]\cap L(G)$. It is known that normality implies observability~\cite{CL08}. Thus, for specifications that are either not controllable or not observable, supremal controllable and normal sublanguages are considered instead.\footnote{In~\cite{caiCDC13}, relative observability has been introduced that can be used instead of normality. Relative observability is a condition stronger than observability and weaker than normality. It was shown to be closed under language unions, hence supremal relatively observable sublanguages exist, and relative observability can thus replace normality in practical applications. Let $K \subseteq C \subseteq L(G)$. The language $K$ is relatively observable or {\em $C$-observable\/} with respect to a plant $G$ and a projection $P:\Sigma^*\to\Sigma_o^*$ if for all words $s, s'\in \Sigma^*$ such that $P(s) = P(s')$, it holds that, for all $\sigma\in \Sigma$, if $s\sigma \in \overline{K}$, $s' \in \overline{C}$ and $s'\sigma \in L(G)$, then $s'\sigma \in \overline{K}$. Note that for $C=K$ the definition coincides with the definition of observability. An interested reader can find more about relative observability in coordination control in~\cite{KomendaMS14a}.}
  Let $\supC(K,L(G),\Sigma_u)$ denote the supremal controllable sublanguage of $K$ with respect to $L(G)$ and $\Sigma_u$, which always exists and is equal to the union of all controllable sublanguages of $K$, cf.~\cite{Won04}. Similarly, for the partial observation case, let $\supcn(K,L(G),\Sigma_u,P)$ denote the supremal controllable and normal sublanguage of $K$ with respect to $L(G)$, $\Sigma_u$, and $P$. The supremal controllable and normal sublanguage always exists and equals the union of all controllable and normal sublanguages of $K$, see, e.g,~\cite{CL08}. A formula for computing supremal controllable and normal sublanguages can be found in~\cite{brandt}.

  \paragraph{Observer and OCC/LCC properties}
  The projection $P:\Sigma^* \to \Sigma_0^*$, with $\Sigma_0\subseteq \Sigma$, is an {\em $L$-observer\/} for $L\subseteq \Sigma^*$ if, for all $t\in P(L)$ and $s\in \overline{L}$, $P(s)$ is a prefix of $t$ implies that there exists $u\in \Sigma^*$ such that $su\in L$ and $P(su)=t$. 
 
  The projection $P:\Sigma^* \to \Sigma_0^*$ is {\em output control consistent} (OCC) for $L\subseteq\Sigma^*$ if for every $s\in \overline{L}$ of the form $s=\sigma_1\sigma_2\dots\sigma_\ell$ or $s = s'\sigma_0 \sigma_1 \dots \sigma_\ell$, for $\ell\ge 1$, where $s'\in \Sigma^*$, $\sigma_0, \sigma_\ell\in \Sigma_k$, and $\sigma_i \in \Sigma\setminus \Sigma_k$, for $i=1,2,\dots,\ell-1$, if $\sigma_\ell \in \Sigma_{u}$, then $\sigma_i \in \Sigma_{u}$, for all $i=1,2,\dots,\ell-1$. 
  The OCC condition can be replaced by a weaker condition called local control consistency (LCC) discussed in~\cite{SB11,SB08}, see~\cite{JDEDS}. Let $L$ be a prefix-closed language over the alphabet $\Sigma$, and let $\Sigma_0$ be a subset of $\Sigma$. The projection $P:\Sigma^*\to \Sigma_0^*$ is {\em locally control consistent\/} (LCC) with respect to a word $s\in L$ if for all events $\sigma_u\in \Sigma_0\cap \Sigma_u$ such that $P(s)\sigma_u\in P(L)$, it holds that either there does not exist any word $u\in (\Sigma\setminus \Sigma_0)^*$ such that $su\sigma_u \in L$, or there exists a word $u\in (\Sigma_u\setminus \Sigma_0)^*$ such that $su\sigma_u \in L$. The projection $P$ is LCC with respect to $L$ if $P$ is LCC for all words of $L$.

  \paragraph{Conditional decomposability}
  A language $K$ is {\em conditionally decomposable\/} with respect to event sets $\Sigma_1$, $\Sigma_2$, and $\Sigma_k$, where $\Sigma_1\cap\Sigma_2\subseteq\Sigma_k\subseteq \Sigma_1\cup\Sigma_2$, if $K = P_{1+k}(K)\parallel P_{2+k}(K)$, where $P_{i+k}:(\Sigma_1\cup\Sigma_2)^*\to (\Sigma_i\cup\Sigma_k)^*$ is a projection, for $i=1,2$.
  There always exists an extension of $\Sigma_k$ which satisfies this condition; $\Sigma_k = \Sigma_1 \cup \Sigma_2$ is a trivial example. There exists a polynomial algorithm to check this condition, and to extend the event set to satisfy the condition, see~\cite{scl12}. To find the minimal extension with respect to set inclusion is NP-hard~\cite{JDEDS}.

\section{Coordination control synthesis}\label{sec:controlsynthesis}
  In this section, we first reformulate the basic coordination control problem. Unlike the original approach~\cite{automatica2011,JDEDS,scl11}, the supervisor for a coordinator is not included in the coordinated closed-loop. Consequently, the closed-loop for the coordinator is replaced by the coordinator itself in the plants for local supervisors. A supervisor on the coordinator alphabet, however, plays a significant role at the end of the coordination control synthesis.
 
  \begin{problem}[Relaxed coordination control problem]\label{problem:controlsynthesis}
    Consider generators $G_1$ and $G_2$ over the alphabets $\Sigma_1$ and $\Sigma_2$, respectively. Let $\Sigma_k$ be an alphabet such that $\Sigma_1\cap \Sigma_2 \subseteq \Sigma_k \subseteq \Sigma_1\cup\Sigma_2$. A generator $G_k$ over the alphabet $\Sigma_k$ is called a {\em coordinator}. Assume that a specification $K \subseteq L_m(G_1 \parallel G_2 \parallel G_k)$ and its prefix-closure $\overline{K}$ are conditionally decomposable with respect to $\Sigma_1$, $\Sigma_2$, and $\Sigma_k$. The aim of coordination control is to determine nonblocking supervisors $S_1$ and $S_2$ such that $L_m(S_i/ [G_i \parallel G_k ])\subseteq P_{i+k}(K)$, for $i=1,2$, and such that the closed-loop system with the coordinator satisfies $L_m(S_1/ [G_1 \parallel G_k ]) ~ \parallel ~ L_m(S_2/ [G_2 \parallel G_k ]) = K$.
  \end{problem}

  Compared to our previous work~\cite{automatica2011,JDEDS}, the construction of a coordinator that does not affect the behavior of the plant remains unchanged~\cite{JDEDS}.

  \begin{algorithm}[Construction of a coordinator]\label{algorithm}
    Consider generators $G_1$ and $G_2$ over the alphabets $\Sigma_1$ and $\Sigma_2$, respectively, and let $K$ be a specification. We compute the event set $\Sigma_k$ and the coordinator $G_k$ as follows.
    \begin{enumerate}
      \item Let $\Sigma_k = \Sigma_1\cap \Sigma_2$ be the set of all shared events of the generators $G_1$ and $G_2$.
      \item Extend the alphabet $\Sigma_k$ so that $K$ and $\overline{K}$ are conditional decomposable with respect to $\Sigma_1$, $\Sigma_2$, and $\Sigma_k$, cf.~\cite{scl12} for a polynomial algorithm.
      \item Define the coordinator $G_k$ as $G_k = P_k(G_1) \parallel P_k(G_2)$, where $P_k:(\Sigma_1\cup\Sigma_2)^*\to \Sigma_k^*$.
    \end{enumerate}
  \end{algorithm}
 
  It is the folklore that, in general, the computation of a projected generator can be exponential. However, it is also known that if the projection satisfies the observer property, then the projected generator is smaller than the original generator. Therefore, we might need to add a middle step to extend the event set $\Sigma_k$ so that the projection $P_k:\Sigma^* \to \Sigma_k^*$ is an $L(G_i)$-observer, for $i=1,2$, before computing the coordinator $G_k$ in step (3), cf.~\cite{pcl08,pcl12}.

  In our coordination control framework, the full observation case, conditional controllability plays the role of a necessary and sufficient condition for the existence of local supervisors that in cooperation with the coordinator achieve the specification language. Since in our relaxed framework we do not use the supervisor for a coordinator, controllability of the coordinator part $P_k(K)$ of the specifications is skipped from the relaxed definitions below, compared to our previous work.
 
  \begin{definition}[Relaxed conditional controllability]\label{def:conditionalcontrollability}
    Let $G_1$ and $G_2$ be generators over the alphabets $\Sigma_1$ and $\Sigma_2$, respectively, and let $G_k$ be a coordinator over the alphabet $\Sigma_k$. A language $K\subseteq L_m(G_1\parallel G_2\parallel G_k)$ is {\em conditionally controllable\/} for generators $G_1$, $G_2$, $G_k$ if $P_{i+k}(K)$ is controllable with respect to $L(G_i \parallel G_k)$ and $\Sigma_{i+k,u}$, where $\Sigma_{i+k,u}=(\Sigma_i\cup \Sigma_k)\cap \Sigma_u$, for $i=1,2$.
  \end{definition}
 
  \begin{definition}[Relaxed conditional observability]\label{def:conditionalobservability}
    Let $G_1$ and $G_2$ be generators over the alphabet $\Sigma_1$ and $\Sigma_2$, respectively, and let $G_k$ be a coordinator over the alphabet $\Sigma_k$. A language $K\subseteq L_m(G_1\parallel G_2\parallel G_k)$ is {\em conditionally observable\/} for generators $G_1$, $G_2$, $G_k$ and projections $Q_{i+k}: \Sigma_{i+k}^*\to \Sigma_{i+k,o}^*$, for $i=1,2$, if $P_{i+k}(K)$ is observable with respect to $L(G_i\parallel G_k)$, $\Sigma_{i+k,c}$ and $Q_{i+k}$, where $\Sigma_{i+k,o}=\Sigma_o \cap (\Sigma_i \cup \Sigma_k)$ and $\Sigma_{i+k,c}=\Sigma_c \cap (\Sigma_i \cup \Sigma_k)$.
  \end{definition}

  For the relaxed definition of conditional controllability it holds that every conditionally controllable and conditionally decomposable language is controllable~\cite{JDEDS}. We now extend this result for conditional observability.
 
  \begin{proposition}\label{prop3}
    Let $G_i$ be a generator over the alphabet $\Sigma_i$, for $i=1,2,k$, and let $G=G_1\parallel G_2\parallel G_k$. Let $K\subseteq L_m(G)$ be a specification such that $\overline{K}$ is conditionally decomposable with respect to $\Sigma_1$, $\Sigma_2$, $\Sigma_k$. If $K$ is conditionally controllable and conditionally observable for generators $G_1$, $G_2$, $G_k$ and projections $Q_{i+k}: \Sigma_{i+k}^*\to \Sigma_{i+k,o}^*$, for $i=1,2$, then the specification $K$ is controllable and observable with respect to $L(G)$, $\Sigma_u=\Sigma_{1,u}\cup\Sigma_{2,u}$, and $Q:(\Sigma_1\cup \Sigma_2)^*\to (\Sigma_{1,o}\cup\Sigma_{2,o})^*$.
  \end{proposition}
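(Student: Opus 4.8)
\emph{The plan} is to treat controllability and observability separately. Controllability of $K$ with respect to $L(G)$ and $\Sigma_u=\Sigma_{1,u}\cup\Sigma_{2,u}$ is precisely the assertion that a conditionally controllable and conditionally decomposable language is controllable, which is already available from~\cite{JDEDS}; I would simply invoke it and spend the work on observability. Throughout, write $\Sigma_o=\Sigma_{1,o}\cup\Sigma_{2,o}$, so that $Q$ is the projection onto $\Sigma_o$.

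For observability, fix $s\in\overline{K}$ and $\sigma\in\Sigma_c$ with $s\sigma\notin\overline{K}$ and $s\sigma\in L(G)$, and argue by contradiction: suppose some $s'$ with $Q(s')=Q(s)$ satisfies $s'\sigma\in\overline{K}$. The first step is to localize the violation to one component. By conditional decomposability, $\overline{K}=P_{1+k}^{-1}(P_{1+k}(\overline{K}))\cap P_{2+k}^{-1}(P_{2+k}(\overline{K}))$, so $s\sigma\notin\overline{K}$ forces $P_{i+k}(s\sigma)\notin P_{i+k}(\overline{K})$ for some $i\in\{1,2\}$. Since $s\in\overline{K}$ gives $P_{i+k}(s)\in P_{i+k}(\overline{K})$, this can happen only if $\sigma$ is actually seen on that component, i.e. $\sigma\in\Sigma_i\cup\Sigma_k$; hence $P_{i+k}(s\sigma)=P_{i+k}(s)\sigma$ and $\sigma\in\Sigma_{i+k,c}$.

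Next I would push the data down to $G_i\parallel G_k$ and apply conditional observability of $P_{i+k}(K)$. Because projection commutes with prefix closure, $P_{i+k}(\overline{K})=\overline{P_{i+k}(K)}$, so $P_{i+k}(s)\in\overline{P_{i+k}(K)}$ and $P_{i+k}(s)\sigma\notin\overline{P_{i+k}(K)}$. Projecting $s\sigma\in L(G)$ and using $P_{i+k}(L(G))\subseteq L(G_i\parallel G_k)$ (a composition-of-projections fact, since the $\Sigma_i$- and $\Sigma_k$-projections of a word of $L(G)$ lie in $L(G_i)$ and $L(G_k)$) yields $P_{i+k}(s)\sigma\in L(G_i\parallel G_k)$. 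Conditional observability then gives $Q_{i+k}^{-1}[Q_{i+k}(P_{i+k}(s))]\sigma\cap\overline{P_{i+k}(K)}=\emptyset$.

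The crux, and the step I expect to be the main bookkeeping obstacle, is transporting the indistinguishability $Q(s')=Q(s)$ into the component. The identity I would establish is $Q_{i+k}\circ P_{i+k}=R_{i+k}\circ Q$, where $R_{i+k}$ is the projection from $\Sigma_o^*$ onto $(\Sigma_o\cap(\Sigma_i\cup\Sigma_k))^*$: both sides equal the projection of $(\Sigma_1\cup\Sigma_2)^*$ onto $\Sigma_o\cap(\Sigma_i\cup\Sigma_k)$, using $\Sigma_{i+k,o}=\Sigma_o\cap(\Sigma_i\cup\Sigma_k)$. From this, $Q(s')=Q(s)$ gives $Q_{i+k}(P_{i+k}(s'))=Q_{i+k}(P_{i+k}(s))$, so $P_{i+k}(s')\in Q_{i+k}^{-1}[Q_{i+k}(P_{i+k}(s))]$; and $s'\sigma\in\overline{K}$ with $\sigma\in\Sigma_i\cup\Sigma_k$ gives $P_{i+k}(s')\sigma=P_{i+k}(s'\sigma)\in\overline{P_{i+k}(K)}$. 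These two facts contradict the emptiness obtained above, proving observability. The argument parallels the known controllability proof of~\cite{JDEDS}, the only genuinely new ingredient being the projection-commutation identity aligning the global map $Q$ with the local maps $Q_{i+k}$.
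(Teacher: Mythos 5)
Your proof is correct, but it takes a genuinely different route from the paper's. The paper's proof is compositional and only two lines long: conditional decomposability of $\overline{K}$ gives $\overline{K}=\overline{P_{1+k}(K)}\parallel\overline{P_{2+k}(K)}$, the two factors are prefix-closed and hence synchronously nonconflicting, the plant factors as $L(G)=L(G_1\parallel G_k)\parallel L(G_2\parallel G_k)$, and so controllability and observability of $K$ follow simultaneously from the preservation of both properties under synchronous composition (Lemmata~\ref{feng} and~\ref{obsComposition2} in the appendix). You instead quote the controllability half from~\cite{JDEDS} (legitimate --- the paper states exactly this result in the sentence preceding the proposition) and prove observability directly from the definition: you localize the failure $s\sigma\notin\overline{K}$ to one component via the nontrivial inclusion $P_{1+k}^{-1}P_{1+k}(\overline{K})\cap P_{2+k}^{-1}P_{2+k}(\overline{K})\subseteq\overline{K}$, deduce $\sigma\in\Sigma_{i+k,c}$ since otherwise $P_{i+k}(s\sigma)=P_{i+k}(s)\in P_{i+k}(\overline{K})$, push the data down using $P_{i+k}(L(G))\subseteq L(G_i\parallel G_k)$, apply local observability of $P_{i+k}(K)$, and transport indistinguishability via the commutation identity $Q_{i+k}\circ P_{i+k}=R_{i+k}\circ Q$; every step checks out. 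In effect your observability argument inlines a specialization of the paper's Lemma~\ref{obsComposition2}: what you lose in modularity (the lemma is reusable and lets the paper dispatch controllability and observability uniformly in one stroke), you gain in explicitness, since your commutation identity spells out precisely the step that the lemma's proof glosses over, namely that $Q(s)=Q(s')$ implies $Q_{i+k}(P_{i+k}(s))=Q_{i+k}(P_{i+k}(s'))$.
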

  \begin{proof}
    By assumption, the languages $\overline{P_{i+k}(K)}$ are controllable and observable with respect to $L(G_1\parallel G_k)$ and $\Sigma_{i+k,u}$, $i=1,2$. Thus, by Lemmata~\ref{feng} and~\ref{obsComposition2}, the language $\overline{K} = \overline{P_{1+k}(K)} \parallel \overline{P_{2+k}(K)}$ is controllable and observable with respect to $L(G_1 \parallel G_2 \parallel G_k)$, $\Sigma_u$ and $Q$, hence also $K$ is.
  \qed\end{proof}

  In the case of non-prefix-closed specifications, conditional closedness has been introduced. Again, compared to our previous work, the closedness on the coordinator part of the specifications is not needed in our relaxed framework.

  \begin{definition}[Relaxed conditional closedness]\label{def:conditionalclosed}
    A nonempty language $K$ over the alphabet $\Sigma$ is {\em conditionally closed\/} for generators $G_1$ and $G_2$, and a coordinator $G_k$ if $P_{i+k}(K)$ is $L_m(G_i \parallel G_k)$-closed, for $i=1,2$.
  \end{definition}

  The main theorem of coordination control with partial observation with the simplified closed-loop system and the relaxed definition of conditional observability is as follows. It has a simplified form compared with~\cite{scl11}.

  \begin{theorem}\label{thm1}\label{th:controlsynthesissafety}
    Consider the setting of Problem~\ref{problem:controlsynthesis}. There exist nonblocking supervisors $S_1$ and $S_2$ such that the closed-loop system satisfies $L_m(S_1/[G_1 \parallel G_k]) \parallel L_m(S_2/[G_2 \parallel G_k]) = K$ if and only if $K$ is
    (i) conditionally controllable for generators $G_1$, $G_2$, $G_k$, 
    (ii) conditionally closed for generators $G_1$, $G_2$, $G_k$, and 
    (iii) conditionally observable for $G_1$, $G_2$, $G_k$ and projections $Q_{1+k}$, $Q_{2+k}$ from $\Sigma_{i+k}^*$ to $\Sigma_{i+k,o}^*$, for $i=1,2$.
  \end{theorem}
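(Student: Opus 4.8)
The plan is to reduce everything to the fundamental monolithic characterization recalled in Section~\ref{sec:preliminaries}: a nonblocking supervisor $S$ with $L_m(S/G)=K$ exists if and only if $K$ is controllable, $L_m(G)$-closed, and observable. The relaxed conditional notions are engineered precisely so that, component by component, $P_{i+k}(K)$ carries these three properties with respect to the \emph{local} plant $G_i\parallel G_k$. Thus the proof splits into two implications, each obtained by applying the monolithic theorem to the two plants $G_i\parallel G_k$ and then gluing the local closed loops using conditional decomposability.

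For sufficiency ($\Leftarrow$), I would argue locally for each $i=1,2$. Conditional controllability gives that $P_{i+k}(K)$ is controllable with respect to $L(G_i\parallel G_k)$ and $\Sigma_{i+k,u}$; conditional closedness gives that $P_{i+k}(K)$ is $L_m(G_i\parallel G_k)$-closed; and conditional observability gives that $P_{i+k}(K)$ is observable with respect to $L(G_i\parallel G_k)$, $\Sigma_{i+k,c}$, and $Q_{i+k}$. Applying the monolithic theorem to the plant $G_i\parallel G_k$ with specification $P_{i+k}(K)$ therefore yields a nonblocking supervisor $S_i$ with $L_m(S_i/[G_i\parallel G_k])=P_{i+k}(K)$, which in particular meets the requirement $L_m(S_i/[G_i\parallel G_k])\subseteq P_{i+k}(K)$ of Problem~\ref{problem:controlsynthesis}. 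It then remains to compose: since $K$ is conditionally decomposable, $K=P_{1+k}(K)\parallel P_{2+k}(K)$, whence $L_m(S_1/[G_1\parallel G_k])\parallel L_m(S_2/[G_2\parallel G_k])=P_{1+k}(K)\parallel P_{2+k}(K)=K$, as required.

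The necessity direction ($\Rightarrow$) is where the real work lies, and the key step is to show that the given supervisors must actually realize exactly the projected specifications, i.e.\ $L_m(S_i/[G_i\parallel G_k])=P_{i+k}(K)$. Writing $M_i=L_m(S_i/[G_i\parallel G_k])$, the hypotheses give $M_i\subseteq P_{i+k}(K)$ together with $M_1\parallel M_2=K$. The reverse inclusion follows from a projection identity for synchronous products: since $M_1\parallel M_2=P_{1+k}^{-1}(M_1)\cap P_{2+k}^{-1}(M_2)\subseteq P_{1+k}^{-1}(M_1)$ and $M_1\subseteq\Sigma_{1+k}^*$, applying $P_{1+k}$ gives $P_{1+k}(K)=P_{1+k}(M_1\parallel M_2)\subseteq M_1$. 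Combined with $M_1\subseteq P_{1+k}(K)$ this forces $M_1=P_{1+k}(K)$, and symmetrically $M_2=P_{2+k}(K)$. Note that once $M_i=P_{i+k}(K)$ is established, $P_{i+k}(K)\subseteq L_m(G_i\parallel G_k)$ holds automatically, since $M_i$ is by definition a sublanguage of the local marked plant.

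The argument then closes by invoking the monolithic theorem in the reverse direction: each $S_i$ is a nonblocking supervisor for $G_i\parallel G_k$ whose marked closed-loop language equals $P_{i+k}(K)$, so $P_{i+k}(K)$ is necessarily controllable with respect to $L(G_i\parallel G_k)$ and $\Sigma_{i+k,u}$, $L_m(G_i\parallel G_k)$-closed, and observable with respect to $L(G_i\parallel G_k)$, $\Sigma_{i+k,c}$, and $Q_{i+k}$; by the relaxed definitions these are precisely (i), (ii), and (iii). I expect the main obstacle to be the bookkeeping behind the projection identity $P_{1+k}(M_1\parallel M_2)\subseteq M_1$ and its symmetric counterpart, which rests on the shared alphabet of $\Sigma_{1+k}$ and $\Sigma_{2+k}$ being exactly $\Sigma_k$, a consequence of $\Sigma_1\cap\Sigma_2\subseteq\Sigma_k$; everything else reduces to two applications of the monolithic theorem together with conditional decomposability of $K$.
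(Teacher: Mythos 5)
Your proposal is correct and follows essentially the same route as the paper's proof: sufficiency by applying the monolithic supervisory control theorem to each local plant $G_i\parallel G_k$ and gluing via conditional decomposability, and necessity by the sandwich argument $P_{i+k}(K)\subseteq L_m(S_i/[G_i\parallel G_k])\subseteq P_{i+k}(K)$ (using the inclusion requirement from Problem~\ref{problem:controlsynthesis} and the projection of the product equation) followed by the converse direction of the monolithic theorem. Your explicit derivation of $P_{1+k}(M_1\parallel M_2)\subseteq M_1$ via $M_1\parallel M_2\subseteq P_{1+k}^{-1}(M_1)$ is just a spelled-out version of the paper's appeal to $\Sigma_{1+k}\cap\Sigma_{2+k}\subseteq\Sigma_k$, so there is no substantive difference.
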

  \begin{proof}
    (If) Assume that the specification $K$ satisfies the assumptions and let $G=G_1 \parallel G_2 \parallel G_k$. Since $P_{1+k}(K) \subseteq L_m(G_1\parallel G_k)$, the assumption that $K$ is conditionally controllable, conditionally closed, and conditionally observable implies that there exists a nonblocking supervisor $S_{1}$ such that $L_m(S_1/ [ G_1 \parallel G_k]) = P_{1+k}(K)$. A similar argument for $P_{2+k}(K)$ implies that there exists a nonblocking supervisor $S_2$ such that $L_m(S_2/ [G_2 \parallel G_k]) = P_{2+k}(K)$. Since $K$ and $\overline{K}$ are conditionally decomposable, it follows that $L_m(S_1/[G_1 \parallel G_k]) \parallel L_m(S_2/[G_2 \parallel G_k]) = P_{1+k}(K) \parallel P_{2+k}(K) = K$.
    
    (Only if) To prove this implication, we can write $K = L_m(S_1 \parallel G_1 \parallel S_2 \parallel G_2 \parallel G_k)$. Since $\Sigma_{1+k}\cap\Sigma_{2+k}\subseteq \Sigma_k$, the application of $P_{1+k}$ to the equation and the assumptions of Problem~\ref{problem:controlsynthesis} give that $P_{1+k}(K) \subseteq L_m(S_1/ [G_1 \parallel G_k]) \subseteq P_{1+k}(K)$. Taking $G_1 \parallel G_k$ as a new plant, the basic supervisory control theorem gives that $P_{1+k}(K)$ is controllable with respect to $L(G_1 \parallel G_k)$ and $\Sigma_{1+k,u}$, $L_m(G_1 \parallel G_k)$-closed, and observable with respect to $L(G_1\parallel G_k)$, $\Sigma_{1+k,c}$, and $Q_{1+k}$. The case of $P_{2+k}(K)$ is analogous.
  \qed\end{proof}

  The main existential result for DES with full observation then has the same form as in~\cite{JDEDS}, but with simplified definitions and the relaxed form of the coordinated system.

  \begin{cor}\label{cor:controlsynthesissafety}
    Consider the setting of Problem~\ref{problem:controlsynthesis}. There exist nonblocking supervisors $S_1$ and $S_2$ such that $L_m(S_1/[G_1 \parallel G_k]) \parallel L_m(S_2/[G_2 \parallel G_k]) = K$ if and only if $K$ is both conditionally controllable and conditionally closed for generators $G_1$, $G_2$, $G_k$.
    \qed
  \end{cor}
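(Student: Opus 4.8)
The plan is to obtain the corollary as the full-observation specialization of Theorem~\ref{thm1}, so that essentially all of the work has already been carried out there. The key observation is that under full observation we have $\Sigma_o = \Sigma$, whence $\Sigma_{i+k,o} = \Sigma_o \cap (\Sigma_i\cup\Sigma_k) = \Sigma_i\cup\Sigma_k = \Sigma_{i+k}$ for $i=1,2$. Consequently each projection $Q_{i+k}:\Sigma_{i+k}^*\to\Sigma_{i+k,o}^*$ coincides with the identity map on $\Sigma_{i+k}^*$, and the only task is to show that this makes the observability hypothesis of Theorem~\ref{thm1} disappear.

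First I would verify that condition (iii) of Theorem~\ref{thm1}, namely conditional observability with respect to the $Q_{i+k}$, becomes vacuously true once the $Q_{i+k}$ are identities. Indeed, for the identity projection one has $Q_{i+k}^{-1}[Q_{i+k}(s)] = \{s\}$ for every word $s$, so the observability requirement on $P_{i+k}(K)$ reduces, for every $s\in\overline{P_{i+k}(K)}$ and every $\sigma\in\Sigma_{i+k,c}$ with $s\sigma\notin\overline{P_{i+k}(K)}$ and $s\sigma\in L(G_i\parallel G_k)$, to the assertion $\{s\sigma\}\cap\overline{P_{i+k}(K)} = \emptyset$, which holds automatically since $s\sigma\notin\overline{P_{i+k}(K)}$ by hypothesis. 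Hence every language is conditionally observable in the full-observation case, and condition (iii) imposes no restriction.

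With condition (iii) identified as automatic, I would then simply invoke Theorem~\ref{thm1}. For the ``if'' direction, assuming $K$ is conditionally controllable and conditionally closed, I adjoin the trivially satisfied conditional observability and apply Theorem~\ref{thm1} to obtain the required nonblocking supervisors $S_1$ and $S_2$ with $L_m(S_1/[G_1\parallel G_k]) \parallel L_m(S_2/[G_2\parallel G_k]) = K$. For the ``only if'' direction, the existence of such supervisors yields, by Theorem~\ref{thm1}, all three conditions (i)--(iii); discarding the now-trivial (iii) leaves exactly conditional controllability and conditional closedness. This establishes the claimed equivalence.

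Since the argument is a direct specialization, I do not expect any genuine obstacle; the single point requiring a moment's care is the collapse of observability to a tautology under the identity projection, which is the only nontrivial step above and is verified in the second paragraph.
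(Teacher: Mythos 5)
Your proposal is correct and follows exactly the route the paper intends: the corollary is stated with no separate proof precisely because it is the full-observation specialization of Theorem~\ref{thm1}, with $\Sigma_o=\Sigma$ making the projections $Q_{i+k}$ identities. Your careful verification that observability collapses to a tautology under the identity projection (since $Q_{i+k}^{-1}[Q_{i+k}(s)]=\{s\}$) is the one step the paper leaves implicit, and you have it right.
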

  
  Finally, note that in our previous work~\cite{automatica2011} we used the observer and OCC/LCC conditions to ensure that the resulting languages form a solution. We can now provide a simple explanation why these conditions are sufficient.
  \begin{proposition}[\cite{JDEDS}]\label{prop4}
    Let $L$ be a prefix-closed language over the alphabet $\Sigma$, and let $K\subseteq L$ be controllable with respect to $L$ and $\Sigma_u$. If, for $i\in\{1,2\}$, the projection $P_{i+k}$ is an $L$-observer and LCC for the language $L$, then the language $K$ is conditionally controllable.
  \qed\end{proposition}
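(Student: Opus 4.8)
The plan is to verify Definition~\ref{def:conditionalcontrollability} directly. Fixing $i\in\{1,2\}$ and writing $P=P_{i+k}$ and $\Sigma_0=\Sigma_i\cup\Sigma_k$, I would show that $P(K)$ is controllable with respect to $L(G_i\parallel G_k)$ and $\Sigma_{i+k,u}=\Sigma_0\cap\Sigma_u$. The whole argument rests on a single projected-controllability lemma: \emph{if $P$ is an $L$-observer and LCC for $L$ and $K\subseteq L$ is controllable with respect to $L$ and $\Sigma_u$, then $P(K)$ is controllable with respect to $P(L)$ and $\Sigma_0\cap\Sigma_u$.} Once this is in hand, it remains only to identify $P(L)$ with the local plant $L(G_i\parallel G_k)$.

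For the lemma I would argue pointwise. Using that $P$ commutes with prefix closure, $\overline{P(K)}=P(\overline K)$, so take any $t\in P(\overline K)$ and any $\sigma_u\in\Sigma_0\cap\Sigma_u$ with $t\sigma_u\in P(L)$; the goal is $t\sigma_u\in P(\overline K)$. First lift $t$ to a word $s\in\overline K$ with $P(s)=t$. Since $L$ is prefix-closed, $P(L)$ is prefix-closed and $t=P(s)$ is a prefix of $t\sigma_u\in P(L)$, so the \emph{observer} property supplies a continuation $u$ with $su\in L$ and $P(su)=t\sigma_u$; truncating at the (unique observed) occurrence of $\sigma_u$ gives $su_1\sigma_u\in L$ with $u_1\in(\Sigma\setminus\Sigma_0)^*$. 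Now \emph{LCC} applied to $s$, whose first alternative is excluded by the word $u_1$ just produced, yields a word $u'\in(\Sigma_u\setminus\Sigma_0)^*$ with $su'\sigma_u\in L$. Finally, $u'\sigma_u$ consists solely of uncontrollable events, so appending its letters one at a time to $s\in\overline K$ and using controllability of $K$ with respect to $L$ at each step keeps every prefix in $\overline K$; hence $su'\sigma_u\in\overline K$ and, projecting, $t\sigma_u=P(su'\sigma_u)\in P(\overline K)$, as required.

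To finish I would invoke the identification $P_{i+k}(L)=L(G_i\parallel G_k)$, valid in the coordination setting because $L=L(G_1\parallel G_2\parallel G_k)$, the coordinator is $G_k=P_k(G_1)\parallel P_k(G_2)$, and $\Sigma_1\cap\Sigma_2\subseteq\Sigma_k$; this follows from the standard commutation of a natural projection with a synchronous product whose shared alphabet is retained by the projection. Substituting this equality into the conclusion of the lemma gives that $P_{i+k}(K)$ is controllable with respect to $L(G_i\parallel G_k)$ and $\Sigma_{i+k,u}$ for $i=1,2$, which is exactly conditional controllability of $K$.

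I expect the lemma to be the crux, and within it the delicate point is the cooperation between the two hypotheses: the observer property is what guarantees that the forbidden $\sigma_u$ is actually \emph{reachable} from $s$ inside $L$ along an unobserved detour, while LCC is precisely what upgrades that detour to one consisting only of uncontrollable events, which is the sole way to transfer controllability of $K$ (a property about uncontrollable continuations) from $L$ down to $P(L)$. Dropping either hypothesis breaks this bridge, so the careful bookkeeping of which intermediate events are observable versus controllable is where the real work lies; the identification step is routine once the alphabet inclusion $\Sigma_1\cap\Sigma_2\subseteq\Sigma_k$ is used.
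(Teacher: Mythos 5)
Your argument is correct and takes essentially the same route as the paper's: Proposition~\ref{prop4} is quoted from~\cite{JDEDS}, whose proof is precisely your projected-controllability lemma (the $L$-observer lifts the uncontrollable event $\sigma_u$ along an unobserved detour $u_1$, LCC upgrades that detour to one in $(\Sigma_u\setminus\Sigma_0)^*$, and controllability of $K$ with respect to $L$ closes the argument) combined with the identification $P_{i+k}(L)=L(G_i\parallel G_k)$, which you correctly note hinges on the Algorithm~\ref{algorithm} coordinator $G_k=P_k(G_1)\parallel P_k(G_2)$ and $\Sigma_1\cap\Sigma_2\subseteq\Sigma_k$. The same observer/LCC lifting also appears almost verbatim in the paper's own proof of Proposition~\ref{prop8}, so there is nothing to fix.
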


\section{Distributed computation of conditionally controllable and conditionally normal sublanguages}
  \label{sec:procedure}
  It is a common practice in supervisory control to compute a supremal sublanguage of the specification if the given specification does not satisfy the required properties. Thus, in our case, if the specification is either not conditionally controllable or conditional observable, that is, it cannot be achieved as the resulting behavior of the coordinated system according to Theorem~\ref{th:controlsynthesissafety}, we compute the maximal sublanguage of the specification that satisfies both these conditions. Specifically, we compute the supremal conditionally controllable sublanguage, but we cannot, in general, compute the supremal conditionally observable sublanguage, since it does not always exist. Thus, we use conditional normality instead and compute the supremal conditionally controllable and conditionally normal sublanguage. As already mentioned above, a weaker condition than normality, relative observability, has recently been discovered~\cite{caiCDC13} that can be extended to coordination control in a similar way as controllability, observability, or normality~\cite{KomendaMS14a}.
  
  In this section, the sufficient conditions for distributed computations of maximally permissive solutions are unified. Namely, in~\cite{scl11}, we have used the normality condition on the coordinator alphabet, while the observer and OCC conditions have been used instead of the weaker condition of controllability proposed in this paper. The result from~\cite{scl11} for the computation of the supremal conditionally controllable and conditionally normal sublanguage is improved accordingly.

\subsection{Supremal conditionally controllable and conditionally normal sublanguages}  
  We start with the relaxed definition of conditional normality.

  \begin{definition}[Relaxed conditional normality]\label{def:conditionalnormality}
    Let $G_1$ and $G_2$ be generators over the alphabets $\Sigma_1$ and $\Sigma_2$, respectively, and let $G_k$ be a coordinator over the alphabet $\Sigma_k$. A language $K\subseteq L_m(G_1\parallel G_2\parallel G_k)$ is {\em conditionally normal\/} for generators $G_1, G_2, G_k$ and projections $Q_{i+k}: \Sigma_{i+k}^*\to \Sigma_{i+k,o}^*$ if $P_{i+k}(K)$ is normal with respect to $L(G_i\parallel G_k)$ and $Q_{i+k}$, for $i=1,2$.
  \end{definition}

  Since relaxed conditional normality implies relaxed conditional observability, we immediately have the following result.
  \begin{theorem}\label{thm0}
    Consider the setting of Problem~\ref{problem:controlsynthesis}. If the specification $K$ is conditionally controllable for generators $G_1, G_2, G_k$, conditionally closed for generators $G_1, G_2$, $G_k$, and conditionally normal for $G_1, G_2, G_k$ and projections $Q_{i+k}: \Sigma_{i+k}^* \to \Sigma_{i+k,o}^*$, for $i=1,2$, then there exist nonblocking supervisors $S_1$ and $S_2$ such that $L_m(S_1/[G_1 \parallel G_k]) \parallel L_m(S_2/[G_2 \parallel G_k]) = K$.
  \qed\end{theorem}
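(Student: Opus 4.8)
The plan is to deduce this statement directly from Theorem~\ref{thm1}, whose sufficiency direction requires exactly three hypotheses on $K$: conditional controllability, conditional closedness, and conditional observability. Since both the present theorem and Theorem~\ref{thm1} operate within the setting of Problem~\ref{problem:controlsynthesis}, the ambient assumptions (that $K \subseteq L_m(G_1\parallel G_2\parallel G_k)$ and that $K$ and $\overline{K}$ are conditionally decomposable) are shared and carry over at once. Two of the three needed hypotheses---conditional controllability and conditional closedness---are assumed outright, so the only work is to derive conditional observability from the stronger hypothesis of conditional normality.

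First I would unwind the two relevant definitions componentwise. By Definition~\ref{def:conditionalnormality}, conditional normality of $K$ means that for each $i\in\{1,2\}$ the projected language $P_{i+k}(K)$ is normal with respect to $L(G_i\parallel G_k)$ and the projection $Q_{i+k}:\Sigma_{i+k}^*\to \Sigma_{i+k,o}^*$. By Definition~\ref{def:conditionalobservability}, conditional observability of $K$ requires that each $P_{i+k}(K)$ be observable with respect to $L(G_i\parallel G_k)$, the controllable set $\Sigma_{i+k,c}$, and the same projection $Q_{i+k}$. Both definitions range over the same two components and employ identical plants and observation projections, so the reduction is entirely pointwise in $i$.

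The key step is the standard implication, recalled in the preliminaries, that normality implies observability. Applying it separately for $i=1$ and $i=2$ converts normality of $P_{i+k}(K)$ with respect to $L(G_i\parallel G_k)$ and $Q_{i+k}$ into observability of $P_{i+k}(K)$ with respect to $L(G_i\parallel G_k)$, $\Sigma_{i+k,c}$, and $Q_{i+k}$. This is precisely the assertion that $K$ is conditionally observable for $G_1$, $G_2$, $G_k$ and the projections $Q_{1+k}$, $Q_{2+k}$. With all three hypotheses of Theorem~\ref{thm1} now established, its sufficiency direction yields nonblocking supervisors $S_1$ and $S_2$ satisfying $L_m(S_1/[G_1\parallel G_k])\parallel L_m(S_2/[G_2\parallel G_k]) = K$, which is the desired conclusion.

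I do not anticipate a substantive obstacle, as the result is essentially a corollary. The only point requiring care is the bookkeeping that the plant and observation projection in the normality definition coincide with those in the observability definition. Since both are phrased in terms of the same $L(G_i\parallel G_k)$ and $Q_{i+k}$, and since the normality-implies-observability implication holds for any fixed plant, observation projection, and set of controllable events, this alignment is immediate and the argument closes cleanly.
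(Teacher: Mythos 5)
Your proposal is correct and matches the paper's own argument: the paper states the theorem as an immediate consequence of Theorem~\ref{thm1}, noting only that relaxed conditional normality implies relaxed conditional observability (via the standard fact that normality implies observability, applied componentwise to each $P_{i+k}(K)$ with the same plant $L(G_i\parallel G_k)$ and projection $Q_{i+k}$). Your write-up simply spells out the componentwise bookkeeping that the paper leaves implicit.
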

  
  We now show that the supremal conditionally controllable and conditionally normal sublanguages always exist even for the simplified definitions.
  \begin{theorem}\label{existence2}
    The supremal conditionally controllable and conditionally normal sublanguage of a language $K$ exists and equals to the union of all conditionally controllable and conditionally normal sublanguages of the language $K$.
  \end{theorem}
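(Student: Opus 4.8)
The approach is the standard one for supremal-sublanguage existence results: I would show that the family $\mathcal{F}$ of all conditionally controllable and conditionally normal sublanguages of $K$ is closed under arbitrary unions. Granting this, the union $\bigcup \mathcal{F}$ is again a member of $\mathcal{F}$ and contains every member, so it is the supremal conditionally controllable and conditionally normal sublanguage; the two assertions of the theorem (existence, and the equality with the union of all such sublanguages) then hold simultaneously. The family is nonempty because the empty language is trivially conditionally controllable and conditionally normal, and any union of sublanguages of $K$ is again a sublanguage of $K$, hence contained in $L_m(G_1 \parallel G_2 \parallel G_k)$, so admissibility is preserved.

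The reduction rests on the fact that both relaxed notions are defined componentwise through the projections $P_{i+k}$, together with the elementary identity $P_{i+k}(\bigcup_{j} K_j) = \bigcup_{j} P_{i+k}(K_j)$ (the image of a union under any map is the union of the images), valid for $i=1,2$. I would then invoke the two classical closure properties recalled in the preliminaries: controllability with respect to a fixed plant and $\Sigma_u$ is preserved under arbitrary unions, and likewise normality with respect to a fixed plant and projection is preserved under arbitrary unions (these are exactly the statements that the supremal controllable, resp. normal, sublanguage equals the union of all such sublanguages). Taking the fixed plant $L(G_i \parallel G_k)$, the fixed uncontrollable set $\Sigma_{i+k,u}$, and the fixed projection $Q_{i+k}$, each language $\bigcup_j P_{i+k}(K_j)$ is therefore both controllable and normal whenever every $P_{i+k}(K_j)$ is.

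Assembling the two observations: if every $K_j \in \mathcal{F}$, then for each $i\in\{1,2\}$ every $P_{i+k}(K_j)$ is controllable and normal with respect to $L(G_i \parallel G_k)$, hence so is $\bigcup_j P_{i+k}(K_j) = P_{i+k}(\bigcup_j K_j)$; thus $\bigcup_j K_j$ is conditionally controllable and conditionally normal, i.e.\ lies in $\mathcal{F}$. The only point demanding care is to ensure that both properties survive the union \emph{simultaneously}; this is immediate once one notes that the class of controllable languages and the class of normal languages are each union-closed and are both evaluated against the same fixed plant $L(G_i \parallel G_k)$, so their intersection is union-closed as well. It is worth stressing that this is exactly where the relaxed framework pays off: since the coordinator-alphabet requirements on $P_k(K)$ have been removed from the definitions, there is no auxiliary coupling condition that could fail to be preserved, and the union argument closes with no further hypotheses (in particular, without the observer or LCC assumptions needed elsewhere).
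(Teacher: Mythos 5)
Your proof is correct and takes essentially the same route as the paper's: both reduce the theorem to componentwise union-closure via the identity $P_{i+k}\bigl(\bigcup_j K_j\bigr)=\bigcup_j P_{i+k}(K_j)$, the only difference being that the paper verifies the union-closure of controllability and normality of the projected languages with respect to the fixed plant $L(G_i\parallel G_k)$ by explicit in-line computation, whereas you invoke these as the standard recalled facts. Your extra remarks (nonemptiness via the empty language, and the observation that the two properties are union-closed simultaneously against the same fixed plant) are correct and harmless.
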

  \begin{proof}
    We show that conditional controllability and conditional normality are preserved under language union. Let $I$ be an index set, and let $K_i$, for $i\in I$, be conditionally controllable and conditionally normal sublanguages of $K\subseteq L_m(G_1\parallel G_2\parallel G_k)$. Then $P_{1+k} \left(\cup_{i\in I} \overline{K_i}\right)\Sigma_{1+k,u} \cap L(G_1 \parallel G_k) = \cup_{i\in I} \left( P_{1+k}(\overline{K_i})\Sigma_{1+k,u} \cap L(G_1 \parallel G_k) \right) \subseteq P_{1+k}\left(\cup_{i\in I} \overline{K_i}\right)$. Similarly, $P_{1+k}(\bigcup_{i\in I} K_i)$ is normal with respect to $L(G_1\parallel G_k)$ and $Q_{1+k}$ because $Q_{1+k}^{-1}Q_{1+k}P_{1+k}(\overline{\bigcup_{i\in I} K_i}) \cap L(G_1\parallel G_k) = \bigcup_{i\in I} (Q_{1+k}^{-1}Q_{1+k}P_{1+k}(\overline{K_i}) \cap L(G_1\parallel G_k)) = \bigcup_{i\in I} P_{1+k}(\overline{K_i}) = P_{1+k}(\overline{\bigcup_{i\in I}K_i})$ $= P_{1+k}(\bigcup_{i\in I}\overline{K_i})$, where the second equality is by normality of $P_{1+k}(K_i)$ with respect to $L(G_1\parallel G_k)$ and $Q_{k}$, for $i\in I$. Since the case for the projection $P_{2+k}$ is analogous, the proof is complete.
  \qed\end{proof}

  Let $\supccn(K, L, (\Sigma_{1,u}, \Sigma_{2,u}, \Sigma_{k,u}), (Q_{1+k},Q_{2+k}))$ denote the supremal conditionally controllable and conditionally normal sublanguage of $K$ with respect to the plant $L=L(G_1\parallel G_2\parallel G_k)$, the sets of uncontrollable events $\Sigma_{1,u}$, $\Sigma_{2,u}$, $\Sigma_{k,u}$, and projections $Q_{1+k}$ and $Q_{2+k}$, where $Q_{i+k}:\Sigma_{i+k}^* \to \Sigma^*_{i+k,o}$, for $i=1,2$. Similarly as in our original approaches~\cite{JDEDS,scl11}, we define the supremal controllable languages for the local plants combined with the coordinator. Unlike~\cite{JDEDS,scl11}, no supremal controllable language is needed to take care of the coordinator part $P_k(K)$ of the specification and, therefore, the supremal controllable sublanguage for the coordinator part is replaced by the coordinator language itself in formulas below for local plants combined with the coordinator.
  
  Consider the setting of Problem~\ref{problem:controlsynthesis} and define the languages
  \begin{equation}\label{eqCN}
    \begin{aligned}
      \supcn_{1+k} & = \supcn(P_{1+k}(K), L(G_1\parallel G_k), \Sigma_{1+k,u},Q_{1+k})\\
      \supcn_{2+k} & = \supcn(P_{2+k}(K), L(G_2\parallel G_k), \Sigma_{2+k,u},Q_{2+k})
    \end{aligned}
  \end{equation}
  where $\supcn(K,L,\Sigma_u,Q)$ denotes the supremal controllable and normal sublanguage of $K$ with respect to $L$, $\Sigma_u$, and $Q$. In the case of full observation, we use the following notation.
  \begin{equation}\label{eq0}
    \begin{aligned}
      \supC_{1+k} & = \supC(P_{1+k}(K), L(G_1 \parallel G_k), \Sigma_{1+k,u})\\
      \supC_{2+k} & = \supC(P_{2+k}(K), L(G_2 \parallel G_k), \Sigma_{2+k,u})
    \end{aligned}
  \end{equation}

  We now generalize our earlier sufficient conditions for the coordinated computation of the supremal conditionally controllable and conditionally normal sublanguage. Namely, the sufficient conditions are formulated in terms of controllability and normality of the composition of local supervisors projected to the coordinator alphabet. The main constructive result that uses the same form of conditions to guarantee both conditional controllability and conditional normality in a maximally permissive way is stated below.
  
  \begin{theorem}\label{thm2b}
    Consider the setting of Problem~\ref{problem:controlsynthesis} and the languages defined in~(\ref{eqCN}). Assume that $\supcn_{1+k}$ and $\supcn_{2+k}$ are synchronously nonconflicting. If $P_k(\supcn_{1+k})\cap P_k(\supcn_{2+k})$ is controllable and normal with respect to $L(G_k)$, $\Sigma_{k,u}$, and $Q_k$, then $\supcn_{1+k} \parallel \supcn_{2+k} = \supccn(K, L, (\Sigma_{1,u}, \Sigma_{2,u}, \Sigma_{k,u}), (Q_{1+k},Q_{2+k}))$, where $L=L(G_1\parallel G_2\parallel G_k)$.
  \end{theorem}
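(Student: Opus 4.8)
The plan is to prove the asserted identity by establishing the two inclusions separately. Write $M=\supcn_{1+k}\parallel\supcn_{2+k}$, and let $N$ denote the supremal conditionally controllable and conditionally normal sublanguage $\supccn(K,L,(\Sigma_{1,u},\Sigma_{2,u},\Sigma_{k,u}),(Q_{1+k},Q_{2+k}))$, which exists and equals the union of all such sublanguages by Theorem~\ref{existence2}. The goal is to show $M\subseteq N$ and $N\subseteq M$.

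The inclusion $N\subseteq M$ is the short half and, notably, does not use the controllability/normality hypothesis on the coordinator. By Definitions~\ref{def:conditionalcontrollability} and~\ref{def:conditionalnormality}, each $P_{i+k}(N)$ is controllable with respect to $L(G_i\parallel G_k)$ and $\Sigma_{i+k,u}$ and normal with respect to $L(G_i\parallel G_k)$ and $Q_{i+k}$; moreover $P_{i+k}(N)\subseteq P_{i+k}(K)$ since $N\subseteq K$. Thus each $P_{i+k}(N)$ is a controllable and normal sublanguage of $P_{i+k}(K)$, and by supremality of $\supcn_{i+k}$ we get $P_{i+k}(N)\subseteq\supcn_{i+k}$. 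Since every language is contained in the synchronous product of its projections, $N\subseteq P_{1+k}(N)\parallel P_{2+k}(N)\subseteq\supcn_{1+k}\parallel\supcn_{2+k}=M$; observe that conditional decomposability of $N$ is not required here.

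For $M\subseteq N$ I would show that $M$ is itself a conditionally controllable and conditionally normal sublanguage of $K$, whence $M\subseteq N$ by Theorem~\ref{existence2}. That $M\subseteq K$ is immediate from $\supcn_{i+k}\subseteq P_{i+k}(K)$ and conditional decomposability: $M\subseteq P_{1+k}(K)\parallel P_{2+k}(K)=K$. The heart of the matter is to verify that $P_{1+k}(M)$ (and symmetrically $P_{2+k}(M)$) is controllable and normal with respect to $L(G_1\parallel G_k)$. The key identity I would establish is
\begin{equation*}
  P_{1+k}(M)=\supcn_{1+k}\parallel\bigl(P_k(\supcn_{1+k})\cap P_k(\supcn_{2+k})\bigr).
\end{equation*}
This follows by first projecting the synchronous product, $P_{1+k}(\supcn_{1+k}\parallel\supcn_{2+k})=\supcn_{1+k}\parallel P_k(\supcn_{2+k})$, which is valid because $\Sigma_{1+k}\cap\Sigma_{2+k}=\Sigma_k$, and then inserting the factor $P_k(\supcn_{1+k})$ for free via the identity $\supcn_{1+k}\parallel P_k(\supcn_{1+k})=\supcn_{1+k}$ together with the fact that the two coordinator-alphabet languages synchronize by intersection. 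Once the identity is in place, controllability and normality of $P_{1+k}(M)$ with respect to $L(G_1\parallel G_k)=L(G_1\parallel G_k)\parallel L(G_k)$ follow from the composition result for controllability (Lemma~\ref{feng}) and the analogous one for normality, applied to the controllable and normal language $\supcn_{1+k}$ over $L(G_1\parallel G_k)$ and to the \emph{assumed} controllable and normal language $P_k(\supcn_{1+k})\cap P_k(\supcn_{2+k})$ over $L(G_k)$. The case of $P_{2+k}(M)$ is symmetric.

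The step I expect to demand the most care is the prefix-closure bookkeeping underlying both the key identity and the composition lemmas. Controllability and normality are properties of prefix closures, so I must pass from $\overline{M}$ to $\overline{\supcn_{1+k}}\parallel\overline{\supcn_{2+k}}$, which is precisely where the hypothesis that $\supcn_{1+k}$ and $\supcn_{2+k}$ are synchronously nonconflicting enters, and I must ensure that prefix closure commutes with the intersection $P_k(\supcn_{1+k})\cap P_k(\supcn_{2+k})$ and with the projections $P_k$, so that the assumed controllability and normality of the coordinator language can legitimately be invoked in the composition. Checking that these closures distribute — equivalently, that the relevant projected languages are themselves nonconflicting — is the main obstacle; the composition lemmas and the supremality arguments are then routine.
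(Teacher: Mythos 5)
Your proposal is correct and follows essentially the same route as the paper's proof: the same two inclusions, the same key identity $P_{1+k}(M)=\supcn_{1+k}\parallel P_k(M)$ with $P_k(M)=P_k(\supcn_{1+k})\cap P_k(\supcn_{2+k})$, Lemma~\ref{feng} for controllability (your appeal to Lemma~\ref{fengpo} for normality just packages the paper's inline computation), and supremality of $\supcn_{i+k}$ for the reverse inclusion. The prefix-closure issue you flag is resolved exactly as the paper does, by noting that synchronous nonconflictingness of $\supcn_{1+k}$ and $\supcn_{2+k}$ transfers to $\supcn_{1+k}$ and $P_k(M)$ (via $\overline{P(L)}=P(\overline{L})$ and Lemma~\ref{lemma:Wonham}), so no gap remains.
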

  \begin{proof}
    Let $M = \supcn_{1+k} \parallel \supcn_{2+k}$ and $\supccn = \supccn(K, L, (\Sigma_{1+k,u}, \Sigma_{2+k,u},\Sigma_{k,u}), (Q_{1+k},Q_{2+k},Q_{k}))$. To prove that $M\subseteq \supccn$, we show that $M \subseteq P_{1+k}(K)\parallel P_{2+k}(K) = K$ (where the equality is by conditional decomposability) is conditionally controllable with respect to $L$, and conditionally normal with respect to $L$ and $Q_{1+k},Q_{2+k},Q_{k}$. 
    
    However, $\supcn_{1+k} \parallel P_k(M) = \supcn_{1+k} \parallel P_k(\supcn_{1+k}) \parallel P_k(\supcn_{2+k}) = \supcn_{1+k} \parallel P_k(\supcn_{2+k}) = P_{1+k}(M)$ implies, together with Lemma~\ref{feng}, that $P_{1+k}(M)$ is controllable with respect to $[L(G_1\parallel G_k)]\parallel L(G_k)=L(G_1\parallel G_k)$, since synchronous nonconflictingness of $\supcn_{1+k}$ and $\supcn_{2+k}$ implies synchronous nonconflictingness of $\supcn_{1+k}$ and $P_k(M)$. Moreover,
    $Q_{1+k}^{-1}Q_{1+k}(P_{1+k}(M)) \cap L(G_1\parallel G_k) = 
     Q_{1+k}^{-1}Q_{1+k}(\supcn_{1+k} \parallel P_k(M)) \parallel L(G_1\parallel G_k) \subseteq 
     Q_{1+k}^{-1}Q_{1+k}(\supcn_{1+k}) \parallel Q_{1+k}^{-1}Q_{1+k}(P_k(M)) \parallel L(G_1\parallel G_k) = Q_{1+k}^{-1}Q_{1+k}(\supcn_{1+k}) \parallel L(G_1\parallel G_k) \parallel Q_{k}^{-1}Q_{k}(P_k(M)) \parallel L(G_k) = \supcn_{1+k} \parallel P_k(M) = P_{1+k}(M)$, where the last equality is by normality of $\supcn_{1+k}$ and the assumption on $P_k(M)$.
    The case for $P_{2+k}(M)$ is analogous, hence $M\subseteq \supccn$.

    To prove the opposite inclusion, we show that $P_{i+k}(\supccn)\subseteq \supcn_{i+k}$, for $i=1,2$. Indeed, $P_{1+k}(\supccn) \subseteq P_{1+k}(K)$ is controllable and normal with respect to $L(G_1\parallel G_k)$, $\Sigma_{1+k,u}$, and $Q_{1+k}$ by definition, thus $P_{1+k}(\supccn)\subseteq \supcn_{1+k}$. The case of $P_{2+k}(\supccn)$ is analogous, hence $\supccn \subseteq M$.
  \qed\end{proof}
 
  The following corollary is an immediate consequence of the previous result for systems with full observation.
  \begin{cor}\label{thmNEW}
    Consider the setting of Problem~\ref{problem:controlsynthesis} and languages defined in~(\ref{eq0}). Assume that $\supC_{1+k}$ and $\supC_{2+k}$ are synchronously nonconflicting. If $P_k(\supC_{1+k}) \cap P_k(\supC_{2+k})$ is controllable with respect to $L(G_k)$ and $\Sigma_{k,u}$, then $\supC_{1+k} \parallel \supC_{2+k} = \supCC(K, L, (\Sigma_{1,u}, \Sigma_{2,u}, \Sigma_{k,u}))$, where $L=L(G_1\parallel G_2\parallel G_k)$.
  \end{cor}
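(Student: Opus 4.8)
The plan is to derive the corollary directly from Theorem~\ref{thm2b} by specializing it to the full-observation setting, in which every observation projection degenerates to the identity. First I would set $\Sigma_o=\Sigma$, so that the projections $Q_{1+k}$, $Q_{2+k}$, and $Q_k$ all become identity maps on their respective alphabets. Under an identity projection $P$, normality is automatic: for any $K\subseteq L(G)$ the defining equation reads $\overline{K}=P^{-1}[P(\overline{K})]\cap L(G)=\overline{K}\cap L(G)=\overline{K}$, using that $\overline{K}\subseteq \overline{L(G)}=L(G)$ because $L(G)$ is prefix-closed. Hence every sublanguage is trivially normal, and the supremal controllable and normal sublanguage coincides with the supremal controllable sublanguage. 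This yields $\supcn_{i+k}=\supC_{i+k}$ for $i=1,2$, which are exactly the languages of~(\ref{eq0}).

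Next I would note that conditional normality (Definition~\ref{def:conditionalnormality}) is vacuously satisfied in the same way, so that conditional controllability together with conditional normality reduces to conditional controllability alone; the projection arguments drop out of the operator and $\supccn$ collapses to $\supCC$. Correspondingly, the hypothesis of Theorem~\ref{thm2b} that $P_k(\supcn_{1+k})\cap P_k(\supcn_{2+k})$ be controllable \emph{and normal} with respect to $L(G_k)$, $\Sigma_{k,u}$, and $Q_k$ reduces to the controllability of $P_k(\supC_{1+k})\cap P_k(\supC_{2+k})$ with respect to $L(G_k)$ and $\Sigma_{k,u}$, since the normality clause is automatically met. The synchronous-nonconflictingness assumption carries over unchanged.

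With these identifications in place, the conclusion of Theorem~\ref{thm2b} specializes verbatim to $\supC_{1+k}\parallel\supC_{2+k}=\supCC(K,L,(\Sigma_{1,u},\Sigma_{2,u},\Sigma_{k,u}))$ with $L=L(G_1\parallel G_2\parallel G_k)$, which is precisely the claim. I do not expect any genuine obstacle here beyond bookkeeping: the single point that merits an explicit word is that normality under the identity projection always holds, so that $\supcn$ and $\supccn$ really do collapse to $\supC$ and $\supCC$; once this is granted, the corollary is a direct substitution into the already-established theorem.
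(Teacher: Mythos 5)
Your proposal is correct and follows exactly the route the paper intends: the paper states the corollary as an immediate consequence of Theorem~\ref{thm2b} for full observation, which is precisely your specialization with identity observation projections making normality vacuous, so that $\supcn_{i+k}$ and $\supccn$ collapse to $\supC_{i+k}$ and $\supCC$. Your explicit verification that normality holds trivially under the identity projection is the only detail the paper leaves implicit, and you have it right.
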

 
  It turns out that controllability required in Corollary~\ref{thmNEW} is a weaker condition that our earlier conditions of observer and OCC (resp. LCC) properties. Formally the following claim holds true.
 
  \begin{proposition}\label{prop8}
    Consider the setting of Problem~\ref{problem:controlsynthesis} and the languages defined in~(\ref{eq0}). Assume that $\supC_{1+k}$ and $\supC_{2+k}$ are synchronously nonconflicting, and let the projection $P^{i+k}_k:(\Sigma_i\cup\Sigma_k)^*\to\Sigma_k^*$ be an $(P^{i+k}_i)^{-1}L(G_i)$-observer and OCC (resp. LCC) for $(P^{i+k}_i)^{-1}L(G_i)$, for $i=1,2$. Then $P^{1+k}_k(\supC_{1+k})\cap P^{2+k}_k(\supC_{2+k})$ is controllable with respect to $L(G_k)$ and $\Sigma_{k,u}$.
  \end{proposition}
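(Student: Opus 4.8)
The plan is to deduce the claim from the standard fact that a natural projection which is simultaneously an $L$-observer and OCC (resp.\ LCC) for a prefix-closed plant $L$ maps controllable sublanguages of $L$ to controllable sublanguages of the \emph{projected} plant (this is the preservation result underlying \cite{SB11,SB08} and used in \cite{JDEDS}). I would apply this once for each $i\in\{1,2\}$ to the projection $P^{i+k}_k$, and then glue the two projected languages together through their synchronous product, using the nonconflictingness hypothesis together with Lemma~\ref{feng}.

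\emph{The plant projects correctly.} Writing $R_i=P^{i+k}_k\big((P^{i+k}_i)^{-1}L(G_i)\big)$, the construction $G_k=P_k(G_1)\parallel P_k(G_2)$ from Algorithm~\ref{algorithm} gives $L(G_k)=R_1\cap R_2$, and in particular $L(G_k)\subseteq R_i$. Combining this with $L(G_i\parallel G_k)=(P^{i+k}_i)^{-1}L(G_i)\cap(P^{i+k}_k)^{-1}L(G_k)$ and the usual projection/synchronous-product algebra yields the identity $P^{i+k}_k\big(L(G_i\parallel G_k)\big)=L(G_k)$. Hence the image under $P^{i+k}_k$ of the plant against which $\supC_{i+k}$ is controllable is exactly the coordinator language appearing in the conclusion, and the projected set of uncontrollable events is $\Sigma_{i+k,u}\cap\Sigma_k=\Sigma_{k,u}$.

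\emph{Transfer of the structural conditions and intersection.} The preservation result must be invoked with the plant $L(G_i\parallel G_k)$ for which $\supC_{i+k}$ is actually controllable, whereas the hypothesis supplies the observer and OCC (resp.\ LCC) properties only for the larger language $(P^{i+k}_i)^{-1}L(G_i)$. The bridge is that $L(G_i\parallel G_k)$ is obtained from $(P^{i+k}_i)^{-1}L(G_i)$ by intersecting with the constraint $(P^{i+k}_k)^{-1}L(G_k)$, which lives entirely over the observed alphabet $\Sigma_k$; since this extra constraint is visible to $P^{i+k}_k$, I would verify directly from the definitions that the observer property, OCC, and LCC are all inherited by the sublanguage $L(G_i\parallel G_k)$ (for the observer and LCC cases one uses $P^{i+k}_k(L(G_i\parallel G_k))\subseteq L(G_k)$, so that the witnessing continuations remain inside the sublanguage). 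The preservation result then gives that each $A_i:=P^{i+k}_k(\supC_{i+k})$ is controllable with respect to $L(G_k)$ and $\Sigma_{k,u}$. As $A_1$ and $A_2$ are languages over $\Sigma_k$, we have $A_1\cap A_2=A_1\parallel A_2$; since projection commutes with prefix closure and the shared alphabet of $\supC_{1+k}$ and $\supC_{2+k}$ equals $\Sigma_k$, the synchronous nonconflictingness of $\supC_{1+k}$ and $\supC_{2+k}$ carries over to $A_1$ and $A_2$, and Lemma~\ref{feng} applied with both plants equal to $L(G_k)$ (so that $L(G_k)\parallel L(G_k)=L(G_k)$) yields that $A_1\cap A_2$ is controllable with respect to $L(G_k)$ and $\Sigma_{k,u}$, as required.

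I expect the main obstacle to be the descent of the structural conditions: one cannot apply the preservation result against $(P^{i+k}_i)^{-1}L(G_i)$ directly, because $\supC_{i+k}$ is in general \emph{not} controllable with respect to that larger plant (a shared uncontrollable event in $\Sigma_1\cap\Sigma_2\subseteq\Sigma_k$ may be feasible in $G_i$ yet blocked by $G_k$). The crux is therefore to show that the observer and OCC/LCC properties descend from $(P^{i+k}_i)^{-1}L(G_i)$ to $L(G_i\parallel G_k)$, which is precisely what makes the hypotheses usable; the plant-projection algebra and the final intersection step are then routine given Lemma~\ref{feng}.
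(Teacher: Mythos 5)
Your proof is correct, and it takes a genuinely different route from the paper's. The paper argues globally: using Lemma~\ref{lemma:Wonham} it rewrites $P_k(\supC_{1+k})\cap P_k(\supC_{2+k})$ as $P_k(\supC_{1+k}\parallel \supC_{2+k})$, lifts the observer hypotheses to the full plant $L=L(G_1\parallel G_2\parallel G_k)$ via Lemma~\ref{obsComposition}, and then checks controllability by a single string-level argument: a word $t$ with $tu\in P_k(L)=L(G_k)$ is lifted to $s\in\overline{\supC_{1+k}\parallel\supC_{2+k}}$, extended by the observer property to $sv_1u\in L$, the intermediate events $v_1$ are shown uncontrollable by OCC (or traded for an uncontrollable continuation $v_2$ by LCC), and controllability of both $\supC_{i+k}$ together with nonconflictingness puts the extension back into $\overline{\supC_{1+k}\parallel\supC_{2+k}}$. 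You instead work per component: you descend the observer and OCC/LCC hypotheses from $(P^{i+k}_i)^{-1}L(G_i)$ to the actual plant $L(G_i\parallel G_k)$ --- a step the paper never makes explicit, and your justification is sound, since the intersected constraint $(P^{i+k}_k)^{-1}L(G_k)$ lies over the observed alphabet, so the observer and LCC witnesses keep the same $\Sigma_k$-projection and hence remain in the sublanguage, while OCC, being universally quantified over prefixes, is inherited by sublanguages --- then apply the hierarchical preservation result once for each $i$ to conclude that $A_i=P^{i+k}_k(\supC_{i+k})$ is controllable with respect to $P^{i+k}_k(L(G_i\parallel G_k))=L(G_k)$, and finish by intersecting with Lemma~\ref{feng}, transferring nonconflictingness to the projections via Lemma~\ref{lemma:Wonham} and commutation of projection with prefix closure. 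What each approach buys: your modular version keeps the OCC/LCC reasoning strictly local and thereby sidesteps a point the paper's LCC branch glosses over (merging two local uncontrollable continuations into a single global $v_2$ with $sv_2u\in L$, which requires interleaving the two witnesses using $\Sigma_1\cap\Sigma_2\subseteq\Sigma_k$), at the cost of importing or verifying the preservation result and the descent of the three structural conditions, which the paper's global argument in effect inlines in one pass. One shared caveat, not a defect of your proof relative to the paper's: your identity $L(G_k)=R_1\cap R_2$, and hence $P^{i+k}_k(L(G_i\parallel G_k))=L(G_k)$, uses the coordinator of Algorithm~\ref{algorithm}; the paper's own step $P_k(L)=L(G_k)$ rests on the same implicit assumption.
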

  \begin{proof}
    Since $\Sigma_{1+k}\cap\Sigma_{2+k}=\Sigma_k$, Lemma~\ref{lemma:Wonham} implies that $P_k(\supC_{1+k})\cap P_k(\supC_{2+k}) = P_k(\supC_{1+k}\parallel \supC_{2+k})$. By Lemma~\ref{obsComposition}, because $P_k^k = id$ is an $L(G_k)$-observer, $P_k$ is an $L:=L(G_1\parallel G_2\parallel G_k)$-observer. Assume that $t\in \overline{P_k(\supC_{1+k}\parallel \supC_{2+k})}$, $u\in\Sigma_{k,u}$, and $tu\in P_k(L)=L(G_k)$. Then there exists a string $s\in \overline{\supC_{1+k}\parallel \supC_{2+k}}\subseteq L$ such that $P_k(s)=t$. By the observer property, there exists $v$ such that $sv\in L$ and $P_k(sv)=tu$, that is, $v=v_1u$ with $P_k(v_1u)=u$. By the OCC property, $v_1\in\Sigma_u^*$, and by controllability of $\supC_{i+k}$, $i=1,2$, we get $sv_1u\in \overline{\supC_{1+k}} \parallel \overline{\supC_{2+k}} = \overline{\supC_{1+k}\parallel \supC_{2+k}}$, hence $tu\in \overline{P_k(\supC_{1+k}\parallel \supC_{2+k})}$.
  
    Similarly for LCC: from $sv=sv_1u\in L$, by the LCC property, there exists $v_2\in (\Sigma_u\setminus\Sigma_k)^*$ such that $sv_2u\in L$, and by controllability of $\supC_{i+k}$, $i=1,2$, $sv_2u\in \overline{\supC_{1+k}} \parallel \overline{\supC_{2+k}} = \overline{\supC_{1+k}\parallel \supC_{2+k}}$, hence the string $tu\in \overline{P_k(\supC_{1+k}\parallel \supC_{2+k})}$.
  \qed\end{proof}

\subsection{General Case and Complexity}  
  Finally, an important and interesting by-product of the relaxed framework is presented. We believed in the past that a solution to our coordination control problem (in terms of a conditionally controllable sublanguage) can only be computed as a product of languages in some special cases, where sufficient conditions such as those presented in~\cite{JDEDS} hold.  However, due to the above presented relaxation it becomes clear that such a solution can always be computed in our distributed way for prefix-closed specifications. Namely, it suffices to make the resulting language $P_k(\supcn_{1+k}) \cap P_k(\supcn_{2+k})$ controllable and normal with respect to $L(G_k)$ (as required in Theorem~\ref{thm2b}) by simply computing a supervisor for it as the following result suggests. Interestingly, we rediscover this way the role of the supervisor on the coordinator alphabet that is actually postponed to the very end of the coordination control synthesis and it is naturally used only when needed (that is, when controllability of $P_k(\supcn_{1+k}) \cap P_k(\supcn_{2+k})$ with respect to $L(G_k)$ does not hold).

  \begin{theorem}
    Consider the setting of Problem~\ref{problem:controlsynthesis} and languages defined in~(\ref{eqCN}). Let $\supCN'_k = \supCN(P_k(\supcn_{1+k}) \cap P_k(\supcn_{2+k}), L(G_k), \Sigma_{k,u},Q_{k})$. If the languages $\supcn_{i+k}$ and $\supcn_{k}'$ are synchronously nonconflicting (e.g., prefix-closed) for $i=1,2$, then $\supCN'_k \parallel \supCN_{1+k} \parallel \supCN_{2+k}$ is a conditionally controllable and conditionally normal sublanguage of $K$.
  \end{theorem}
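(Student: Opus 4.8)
The plan is to set $M = \supcn'_k \parallel \supcn_{1+k} \parallel \supcn_{2+k}$ and verify the three defining requirements directly: $M \subseteq K$, conditional controllability, and conditional normality (the theorem only asks that $M$ be \emph{a} conditionally controllable and conditionally normal sublanguage, not the supremal one, which keeps the argument short). The inclusion $M \subseteq K$ is immediate: since $\supcn_{i+k} \subseteq P_{i+k}(K)$ for $i=1,2$ and synchronizing with the extra factor $\supcn'_k$ (a language over $\Sigma_k$) can only restrict, one gets $M \subseteq \supcn_{1+k}\parallel\supcn_{2+k}\subseteq P_{1+k}(K)\parallel P_{2+k}(K) = K$, the last equality being conditional decomposability of $K$.

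The crux is a projection computation. By definition of the $\supcn$ operator, $\supcn'_k$ is a sublanguage of $P_k(\supcn_{1+k})\cap P_k(\supcn_{2+k})$, hence $\supcn'_k\subseteq P_k(\supcn_{j+k})$ for each $j$. Using $\Sigma_{1+k}\cap\Sigma_{2+k}=\Sigma_k$ and the projection/synchronous-product identity (Lemma~\ref{lemma:Wonham}), I would show $P_{1+k}(M) = \supcn'_k\parallel\supcn_{1+k}\parallel P_k(\supcn_{2+k}) = \supcn_{1+k}\parallel\supcn'_k$, where the second step collapses $\supcn'_k\parallel P_k(\supcn_{2+k})=\supcn'_k\cap P_k(\supcn_{2+k})=\supcn'_k$. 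The symmetric argument (using $\supcn'_k\subseteq P_k(\supcn_{1+k})$) gives $P_{2+k}(M)=\supcn_{2+k}\parallel\supcn'_k$, and composing projections then yields $P_k(M)=P_k(\supcn_{1+k}\parallel\supcn'_k)=P_k(\supcn_{1+k})\cap\supcn'_k=\supcn'_k$. These identities hold at the level of marked languages and require no nonconflictingness; this is what lets the stated hypotheses stay minimal.

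With these identities in hand, both remaining properties reduce to the corresponding steps in the proof of Theorem~\ref{thm2b}, except that the role previously played by the \emph{assumption} ``$P_k(\supcn_{1+k})\cap P_k(\supcn_{2+k})$ is controllable and normal with respect to $L(G_k)$'' is now played by the fact that $\supcn'_k = P_k(M)$ is controllable and normal with respect to $L(G_k)$, $\Sigma_{k,u}$, and $Q_k$ \emph{by construction}. For controllability: $\supcn_{i+k}$ is controllable with respect to $L(G_i\parallel G_k)$ and $\Sigma_{i+k,u}$, the hypothesis gives that $\supcn_{i+k}$ and $\supcn'_k$ are synchronously nonconflicting, so Lemma~\ref{feng} yields that $P_{i+k}(M)=\supcn_{i+k}\parallel\supcn'_k$ is controllable with respect to $L(G_i\parallel G_k)\parallel L(G_k)=L(G_i\parallel G_k)$ and $\Sigma_{i+k,u}$ (using $\Sigma_{k,u}\subseteq\Sigma_{i+k,u}$). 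For normality, I would replay the chain of (in)equalities from the proof of Theorem~\ref{thm2b} applied to $P_{i+k}(M)$, taking prefix closures via $\overline{\supcn_{i+k}\parallel\supcn'_k}=\overline{\supcn_{i+k}}\parallel\overline{\supcn'_k}$ (again the nonconflictingness hypothesis) and closing with the normality of $\supcn_{i+k}$ and of $\supcn'_k$. This establishes conditional controllability and conditional normality of $M$, completing the proof.

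The main obstacle, and the step deserving the most care, is the normality computation of the previous paragraph: it manipulates inverse projections of synchronous products and relies on the compatibility of $Q_{i+k}$ with $Q_k$ on $\Sigma_k$, exactly as in Theorem~\ref{thm2b}. One conceptual point is worth flagging, as it explains the precise form of the hypotheses: unlike in Theorem~\ref{thm2b}, the argument never requires $\supcn_{1+k}$ and $\supcn_{2+k}$ to be mutually nonconflicting, because everything factors through the single-side projections $P_{i+k}(M)=\supcn_{i+k}\parallel\supcn'_k$; the assumptions that $\supcn_{i+k}$ and $\supcn'_k$ are synchronously nonconflicting for $i=1,2$ are exactly what the composition lemmas consume.
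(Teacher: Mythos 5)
Your proposal is correct and takes essentially the same route as the paper's proof: the same key identity $P_{i+k}(M) = \supcn_{i+k} \parallel \supcn'_k$ (obtained via Lemma~\ref{lemma:Wonham} and $\supcn'_k \subseteq P_k(\supcn_{j+k})$), followed by Lemma~\ref{feng} for controllability and, for normality, the product argument that is exactly Lemma~\ref{fengpo}, which the paper cites and you inline. Your explicit check that $M \subseteq K$ via conditional decomposability is a minor point the paper leaves implicit.
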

  \begin{proof}
    Let $M=\supCN'_k \parallel \supCN_{1+k}\parallel \supCN_{2+k}$ and $\supCCN=\supccn(K, L, (\Sigma_{1+k,u}, \Sigma_{2+k,u}, \Sigma_{k,u}), (Q_{1+k},Q_{2+k})$, where $L=L(G_1\parallel G_2\parallel G_k)$. Then we have that $P_{i+k}(M)
    = \supCN_{i+k} \parallel P_k(\supCN_{j+k}) \parallel $ $\supCN'_k
    = \supCN_{i+k} \parallel P_k(\supCN_{1+k} \cap \supCN_{2+k}) \parallel \supCN'_k
    = \supCN_{i+k} \parallel \supCN'_k$, for $i=1,2$ and $j\neq i$.
    Hence, combining Lemmata~\ref{feng} and~\ref{fengpo}, we obtain that $P_{i+k}(M)$ is controllable with respect to $L(G_i \parallel G_k) \parallel L(G_k) = L(G_i \parallel G_k)$ and $\Sigma_{1+k,u}$, for $i=1,2$, and normal with respect to the same languages and $Q_{i+k}$. Therefore, $M\subseteq \supccn$.
  \qed\end{proof}

  An immediate consequence for systems with full observations follows.
  \begin{cor}\label{propM1}
    Consider the setting of Problem~\ref{problem:controlsynthesis} and languages defined in~(\ref{eq0}). Let $\supC'_k = \supC(P_k(\supC_{1+k}) \cap P_k(\supC_{2+k}), L(G_k), \Sigma_{k,u})$. Then $\supC'_k \parallel \supC_{1+k}\parallel  \supC_{2+k}$  is conditionally controllable sublanguage of $K$.
  \end{cor}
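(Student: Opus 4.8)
The plan is to follow the proof of the preceding theorem verbatim, deleting every normality clause, since under full observation the projections $Q_{i+k}$ are identities and relaxed conditional controllability (Definition~\ref{def:conditionalcontrollability}) is the only obligation. Write $M = \supC'_k \parallel \supC_{1+k} \parallel \supC_{2+k}$. First I would verify the easy half, that $M$ is a sublanguage of $K$: by~(\ref{eq0}) each $\supC_{i+k} \subseteq P_{i+k}(K)$, so $\supC_{1+k} \parallel \supC_{2+k} \subseteq P_{1+k}(K) \parallel P_{2+k}(K) = K$, where the equality is conditional decomposability of $K$ from the setting of Problem~\ref{problem:controlsynthesis}; synchronizing further with $\supC'_k$ only shrinks the language, so $M \subseteq K$.

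The substance is to show that $P_{i+k}(M)$ is controllable with respect to $L(G_i \parallel G_k)$ and $\Sigma_{i+k,u}$ for $i=1,2$. To compute $P_{i+k}(M)$ I would use $\Sigma_{1+k}\cap\Sigma_{2+k}=\Sigma_k$ together with the distributivity of a projection over a synchronous product (Lemma~\ref{lemma:Wonham}): projecting onto $\Sigma_{i+k}$ leaves $\supC_{i+k}$ and $\supC'_k$ untouched, as both live over subalphabets of $\Sigma_{i+k}$, and collapses $\supC_{j+k}$ to $P_k(\supC_{j+k})$, giving $P_{i+k}(M) = \supC_{i+k} \parallel P_k(\supC_{j+k}) \parallel \supC'_k$ for $j\neq i$. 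The key simplification is that, by its very definition, $\supC'_k \subseteq P_k(\supC_{1+k}) \cap P_k(\supC_{2+k}) \subseteq P_k(\supC_{j+k})$; since both $\supC'_k$ and $P_k(\supC_{j+k})$ are languages over $\Sigma_k$, their synchronous product is intersection, so the middle factor is absorbed and $P_{i+k}(M) = \supC_{i+k} \parallel \supC'_k$.

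With this identity in hand controllability is immediate from Lemma~\ref{feng}: $\supC_{i+k}$ is controllable with respect to $L(G_i\parallel G_k)$ and $\Sigma_{i+k,u}$ and $\supC'_k$ is controllable with respect to $L(G_k)$ and $\Sigma_{k,u}$, both by construction, so their synchronous product is controllable with respect to $L(G_i\parallel G_k) \parallel L(G_k)$ and $\Sigma_{i+k,u}$. I would then note $L(G_i\parallel G_k)\parallel L(G_k) = L(G_i\parallel G_k)$, because the $G_k$-constraint is already present in $L(G_i\parallel G_k)$, so $P_{i+k}(M)$ is controllable with respect to the required plant. Running the same argument for $i=2$ yields conditional controllability of $M$, and combined with $M\subseteq K$ this is exactly the assertion.

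The step I expect to be the main obstacle is the application of Lemma~\ref{feng}, which, exactly as in the preceding theorem, needs $\supC_{i+k}$ and $\supC'_k$ to be synchronously nonconflicting so that $\overline{\supC_{i+k} \parallel \supC'_k} = \overline{\supC_{i+k}} \parallel \overline{\supC'_k}$; conditional controllability being a property of prefix-closures, this is the only place where the finer behavior of the product enters. In the prefix-closed setting in which Corollary~\ref{propM1} is intended, the supremal controllable sublanguages $\supC_{i+k}$ and hence $\supC'_k$ are themselves prefix-closed, so nonconflictingness holds automatically and no further hypothesis is required; everything else is the bookkeeping of the preceding proof with the normality clauses removed.
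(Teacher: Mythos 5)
Your proposal is correct and follows essentially the same route as the paper: the paper proves the preceding theorem by computing $P_{i+k}(M) = \supC_{i+k} \parallel P_k(\supC_{j+k}) \parallel \supC'_k$ via Lemma~\ref{lemma:Wonham}, absorbing the middle factor since $\supC'_k \subseteq P_k(\supC_{j+k})$, and applying Lemmata~\ref{feng} and~\ref{fengpo} with the plant identity $L(G_i\parallel G_k)\parallel L(G_k)=L(G_i\parallel G_k)$, and the corollary is exactly this argument with the normality clauses deleted, as you do. You are also right to flag synchronous nonconflictingness as the one hypothesis the corollary's statement leaves implicit (it is inherited from the theorem, with the paper's own parenthetical ``e.g., prefix-closed'' matching your resolution), so your explicit treatment of it, and of $M\subseteq K$, is if anything slightly more careful than the paper's.
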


  It should be noted that the supervisor for the coordinator can be computed in a distributed way, that is, we can compute $\supCN'_k = \supCN(P_k(\supcn_{1+k}), L_k, \Sigma_{k,u}) \cap \supCN(P_k(\supcn_{2+k}), L_k, \Sigma_{k,u})$, or in the full observation case $\supC'_k = \cap_{i=1}^2 \supC( P_k(\supC_{i+k}), L(G_k), \Sigma_{k,u})$. This distributed computation is possible because the plant language is the same in both components of the intersection ($L(G_k)$) and $L(G_k)$ is trivially mutually controllable with respect to itself~\cite{KvSGM08}. Such a distributed computation is also important from the complexity viewpoint, because instead of computing the supervisor for the projection of composition of local supervisors, separate supervisors are computed for projection of individual local supervisors. Similarly as in modular control, their composition (here intersection) is never computed and they operate locally in conjunction with local supervisors $\supcn_{i+k}$, $i=1,2$.
  
  One could also find problematic our assumptions that the languages are required to be synchronously nonconflicting. For instance, these assumptions are trivially satisfied for prefix-closed languages, so they are not needed if one considers only prefix-closed languages. If general, non-prefix-closed languages are considered, it is known that to verify whether a synchronous product (of an unspecified number) of generators is synchronously nonconflicting is a PSPACE-complete problem~\cite{rohloff}. However, it is only the worst case and some optimization techniques could be found in the literature, see e.g.~\cite{FlordalM2006}, or a maximal nonconflicting sublanguage can be computed, cf.~\cite{Chen1991105}. Moreover, the good news of the PSPACE-completeness is that it is still computable in polynomial space. It should also be mentioned that if the number of components is fixed, the problem becomes tractable (namely, it is complete for the class NL of problems solvable in nondeterministic logarithmic space). For the general case with non-prefix-closed languages we have proposed in~\cite{JDEDS}a procedure based on abstraction for computing coordinators for nonblocking, which are needed if local supervisors $\supCN_{i+k}$ are conflicting.

  Finally, it is to be discussed whether it is advantageous to always postpone the computation of the supervisor on the coordinator alphabet. It appears that the former formula $\supC_k = \supC (P_k(K), L(G_k), \Sigma_{k,u})$ that has been integrated within the coordination control synthesis with full observation is computationally simpler then the postponed supervisor $\supC'_k$ above. However, in general, we cannot guarantee that $\supC_{1+k}\parallel \supC_{2+k}$ is conditionally controllable, which is needed for being able to synthesize this language within our coordination control architecture. This suggests that the best strategy is to postpone the computation of a supervisor on the coordinator alphabet and compute $\supC'_k$ instead of $\supC_k$ that we have advocated in our original approach. In the opposite case, where some of the sufficient conditions for conditional controllability of $\supC_{1+k}\parallel \supC_{2+k}$ is satisfied (which may be known in advance especially for the observer and LCC conditions), it is better to use the former approach for the computational reasons. Similarly, in the case $\supC_k \subseteq P_{k}(\supC_{i+k})$, for $i=1,2$, the supremal conditionally controllable sublanguages can be computed in a distributed way even for non-prefix-closed specifications as we have shown in~\cite[Theorem 6]{JDEDS}, hence the a posteriori supervisor is not needed.

\subsection{An example}
  Let $G=G_1\parallel G_2$ be a system over the event set $\Sigma=\{a_1,a_2,c,u,u_1,u_2\}$, where $G_1$ and $G_2$ are defined in Fig.~\ref{figA}, $\Sigma_u=\{u,u_1,u_2\}$.
  \begin{figure}
   \centering
   \includegraphics[scale=.8]{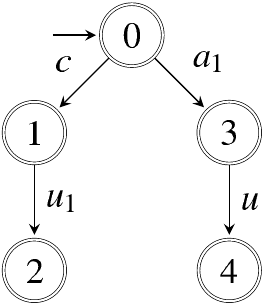}
   \qquad
   \includegraphics[scale=.8]{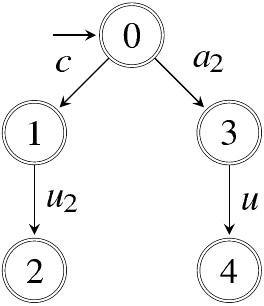}
   \qquad
   \includegraphics[scale=.8]{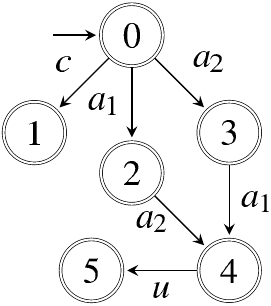}
   \qquad
   \includegraphics[scale=.8]{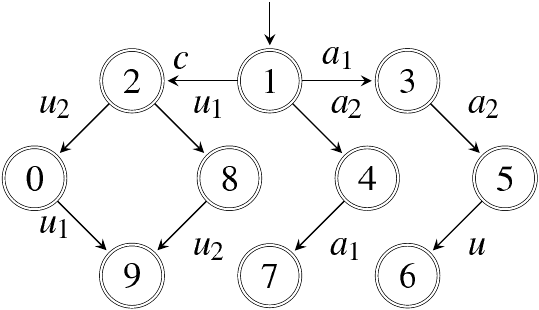}
   \caption{Generators $G_1$, $G_2$, the coordinator, and the generator for the specification $K$.}
   \label{figA}
   \label{figK}
  \end{figure}
  The specification $K$ is defined in Fig.~\ref{figK} (right).
  The coordinator event set $\Sigma_k$ has to contain shared events $c$ and $u$, and to make $K$ conditionally decomposable, at least one of $a_1, a_2$ has to be in $\Sigma_k$. The coordinator event set $\Sigma_k$ has to contain shared events $c$ and $u$, and to make $K$ conditionally decomposable, at least one of $a_1, a_2$ has to be in $\Sigma_k$. Let $\Sigma_k=\{a_2,c,u\}$.
  
  If we required the projections to satisfy the observer and OCC properties, we need to add also $a_1$ to $\Sigma_k$. The coordinator is then defined as $G_k = P_{k}(G_1)\parallel P_{k}(G_2)$, see Fig.~\ref{figA},
   $P_{1+k}(K) = \overline{\{a_1a_2u,a_2a_1,cu_1\}}$, and
   $P_{2+k}(K) = \overline{\{a_1a_2u,a_2a_1,cu_2\}}$.
   This gives that $\supCN_{1+k} = \overline{\{a_1a_2u,a_2,cu_1\}}$ and
   $\supCN_{2+k} = \overline{\{a_1a_2u,a_2,cu_2\}}$,
  whose synchronous product results in the supremal conditionally-controllable sublanguage
  $\overline{\{a_1a_2u,a_2,cu_1u_2,cu_2u_1\}}$ of $K$ that coincides with the supremal controllable sublanguage of $K$.
 
  However, note that $P_k(\supcn_{1+k})\cap P_k(\supcn_{2+k})$ is controllable and normal with respect to $L(G_k)$, hence, by Theorem~\ref{thm2b}, $\supcn_{1+k} \parallel \supcn_{2+k} = \supccn(K, L, (\Sigma_{1,u}, \Sigma_{2,u}, \Sigma_{k,u}), (Q_{1+k},Q_{2+k}))=\overline{\{a_1a_2u,a_2,cu_1u_2,cu_2u_1\}}$. In more detail, the coordinator is now defined as $G_k = P_{k}(G_1)\parallel P_{k}(G_2)$, 
  \begin{figure}[h]
    \centering
    \includegraphics[scale=.5]{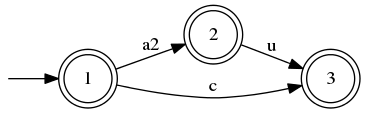}
    \caption{Coordinator $G_k$ for the second case.}
    \label{figB}
  \end{figure}
  see Fig.~\ref{figB}. Then,
   $P_{1+k}(K) = \overline{\{a_1a_2u,a_2a_1,cu_1\}}$, and
   $P_{2+k}(K) = \overline{\{a_2u,cu_2\}}$.
   $\supCN_{1+k} = \overline{\{a_1a_2u,a_2,cu_1\}}$,
   $\supCN_{2+k} = \overline{\{a_2u,cu_2\}}$,
  whose synchronous product results in the supremal conditionally-controllable sublanguage
  $\overline{\{a_1a_2u,a_2,cu_1u_2,cu_2u_1\}}$ of $K$ that coincides with the supremal controllable sublanguage of $K$.

\section{Conclusion} \label{sec:conclusion}
  We have proposed a relaxed definition of the coordination control problem, where the role of a supervisor for the coordinator is postponed until the final phase of the coordination control synthesis. This has led to the relaxed forms of conditional controllability, as well as conditional observability and conditional normality for partially observed discrete-event systems. We have reformulated the main existential and constructive results of coordination control for the relaxed problem. In particular, a weaker (than existing ones) sufficient condition for a distributed computation of supremal conditionally controllable languages have been found in Corollary~\ref{thmNEW}. The corresponding sufficient condition for the distributed computation of supremal conditionally controllable and conditionally normal languages is presented in Theorem~\ref{thm2b}.

  Moreover, we have shown that in the relaxed framework a conditionally controllable and conditionally normal sublanguage can always be computed using a distributive computational scheme that we have proposed in our earlier papers. Unlike our previous approaches, no restrictions are now needed, because the weaker sufficient condition can always be guaranteed by postponing the role of the supervisor on the coordinator alphabet.
 
  We emphasize that our approach can easily be extended to the general case of $n$ subsystems running in parallel. The (single) central coordinator will have in its alphabet all events shared by at least two subsystems. All concepts and results can be extended from the generic case $n=2$ in a straightforward manner. However, with an increasing number of components it is likely that the coordinator will, in some situations, grow too much, e.g., too many events will have to be included into the coordinator alphabet to make the global specification conditionally decomposable. We have therefore recently proposed a multi-level coordination control with a hierarchical structure of subsystems into groups and their coordinators. Another reason why to advocate the use of decentralized coordination with a multi-level structure of subsystems and coordinators is that, in practice, there are often multiple specifications naturally decomposed into smaller specifications, while each specification is over an alphabet of a group of subsystems.

\appendix
\section{Auxiliary results}\label{appendix}
  In this section, we list auxiliary results required in the paper.
  
  \begin{lemma}[Proposition~4.6, \cite{FLT}]\label{feng}
    For $i=1,2$, let $L_i$ be a prefix-closed language over the alphabet $\Sigma_i$, and let $K_i$ be a controllable sublanguage of $L_i$ with respect to $L_i$ and $\Sigma_{i,u}$. Let $\Sigma=\Sigma_1\cup \Sigma_2$. If $K_1$ and $K_2$ are synchronously nonconflicting, then $K_1\parallel K_2$ is controllable with respect to $L_1\parallel L_2$ and $\Sigma_u$.
    \qed
  \end{lemma}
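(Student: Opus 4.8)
The plan is to unfold the definition of controllability for $K_1\parallel K_2$ and reduce it to the controllability of the two components, using the synchronous-product representation through projections. First I would record two structural facts. Since $L_1$ and $L_2$ are prefix-closed, their synchronous product $L_1\parallel L_2$ is again prefix-closed, so $\overline{L_1\parallel L_2}=L_1\parallel L_2$; thus controllability of $K_1\parallel K_2$ amounts to $\overline{K_1\parallel K_2}\,\Sigma_u\cap(L_1\parallel L_2)\subseteq\overline{K_1\parallel K_2}$. Moreover, by the hypothesis that $K_1$ and $K_2$ are synchronously nonconflicting, $\overline{K_1\parallel K_2}=\overline{K_1}\parallel\overline{K_2}=P_1^{-1}(\overline{K_1})\cap P_2^{-1}(\overline{K_2})$, where $P_i:\Sigma^*\to\Sigma_i^*$. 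This is the step that makes the prefix closure compatible with the product, and it is exactly where the nonconflictingness assumption enters.

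Next I would verify the controllability inclusion directly. Fix $s\in\overline{K_1}\parallel\overline{K_2}$, an uncontrollable event $\sigma\in\Sigma_u$, and suppose $s\sigma\in L_1\parallel L_2$; the goal is $s\sigma\in\overline{K_1}\parallel\overline{K_2}$, i.e. $P_i(s\sigma)\in\overline{K_i}$ for $i=1,2$. From $s\in P_1^{-1}(\overline{K_1})\cap P_2^{-1}(\overline{K_2})$ we already have $P_i(s)\in\overline{K_i}$, and from $s\sigma\in L_1\parallel L_2$ we have $P_i(s\sigma)\in L_i$. I would then split on the location of $\sigma$. If $\sigma\notin\Sigma_i$, then $P_i(s\sigma)=P_i(s)\in\overline{K_i}$ and nothing is required. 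If $\sigma\in\Sigma_i$, then under the standing convention $\Sigma_{i,u}=\Sigma_u\cap\Sigma_i$ we have $\sigma\in\Sigma_{i,u}$; since $P_i(s)\sigma=P_i(s\sigma)\in L_i$ and $P_i(s)\in\overline{K_i}$, controllability of $K_i$ with respect to $L_i$ and $\Sigma_{i,u}$ gives $P_i(s\sigma)=P_i(s)\sigma\in\overline{K_i}$. Combining the two components yields $s\sigma\in P_1^{-1}(\overline{K_1})\cap P_2^{-1}(\overline{K_2})=\overline{K_1\parallel K_2}$, which is the desired inclusion.

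The only genuinely delicate point is the case of a shared event $\sigma\in\Sigma_1\cap\Sigma_2$: here the event must be re-enabled simultaneously in both components, so one needs $\sigma$ to be uncontrollable in each alphabet and both controllabilities to apply at once. This is precisely guaranteed by the convention $\Sigma_{i,u}=\Sigma_u\cap\Sigma_i$, which forces a globally uncontrollable shared event to be locally uncontrollable on both sides. Everything else is bookkeeping with projections, so I expect the reduction via synchronous nonconflictingness (identifying $\overline{K_1\parallel K_2}$ with $\overline{K_1}\parallel\overline{K_2}$) to be the conceptual crux, with the shared-event subcase the place where all the hypotheses are used together.
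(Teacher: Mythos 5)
Your proof is correct: the reduction via synchronous nonconflictingness to $\overline{K_1\parallel K_2}=\overline{K_1}\parallel\overline{K_2}$, followed by the projection-wise case split on whether $\sigma\in\Sigma_i$ (with the convention $\Sigma_{i,u}=\Sigma_u\cap\Sigma_i$, which matches the paper's usage, e.g.\ $\Sigma_{i+k,u}=(\Sigma_i\cup\Sigma_k)\cap\Sigma_u$ in Definition~\ref{def:conditionalcontrollability}), is exactly the standard argument. The paper itself gives no proof of this lemma --- it imports it as Proposition~4.6 of~\cite{FLT} --- and your self-contained verification coincides with the argument in that reference, so there is nothing to flag beyond the observation that your remark $\overline{L_1\parallel L_2}=L_1\parallel L_2$ is harmless but unnecessary, since the controllability definition uses $L_1\parallel L_2$ directly.
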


  \begin{lemma}\label{obsComposition2}
    For $i=1,2$, let $L_i$ be a prefix-closed language over the alphabet $\Sigma_i$, and let $K_i$ be an observable sublanguage of $L_i$ with respect to $L_i$, $\Sigma_{i,u}$ and $Q_i:\Sigma_i^*\to \Sigma_{i,o}^*$. Let $\Sigma=\Sigma_1\cup \Sigma_2$. If $K_1$ and $K_2$ are synchronously nonconflicting, then $K_1\parallel K_2$ is observable with respect to $L_1\parallel L_2$, $\Sigma_u$ and $Q:\Sigma^*\to \Sigma_{o}^*$.
  \end{lemma}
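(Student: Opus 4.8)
The plan is to reduce observability of the composed language $K_1\parallel K_2$ to observability of each factor, in exactly the spirit of Lemma~\ref{feng} for controllability. First I would rewrite observability in its equivalent two-string form: $K$ is observable with respect to $L$, $Q$ and $\Sigma_c$ if for all $s,s'\in\overline{K}$ with $Q(s)=Q(s')$ and all $\sigma\in\Sigma_c$, whenever $s\sigma\in\overline{K}$ and $s'\sigma\in L$ then $s'\sigma\in\overline{K}$. This is the contrapositive of the $P^{-1}[P(s)]\sigma\cap\overline{K}=\emptyset$ formulation recalled in the preliminaries (the forbidden word $t\sigma$ there becomes the look-alike prefix $s\sigma$ here). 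The synchronous nonconflictingness hypothesis enters at the start and the end: it gives $\overline{K_1\parallel K_2}=\overline{K_1}\parallel\overline{K_2}$, so that membership of a word $w$ in $\overline{K}$ is equivalent to $P_1(w)\in\overline{K_1}$ and $P_2(w)\in\overline{K_2}$, where $P_i:\Sigma^*\to\Sigma_i^*$.

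Next I record the two bookkeeping facts the argument rests on. For the observation masks, the natural-projection identity $Q_i\circ P_i=P_i^o\circ Q$ holds, where $P_i^o:\Sigma_o^*\to\Sigma_{i,o}^*$; hence $Q(s)=Q(s')$ forces $Q_i(P_i(s))=Q_i(P_i(s'))$, i.e.\ indistinguishability survives restriction to each component. For the control status, since $\Sigma_u=\Sigma_{1,u}\cup\Sigma_{2,u}$, any $\sigma\in\Sigma_c$ with $\sigma\in\Sigma_i$ satisfies $\sigma\in\Sigma_i\setminus\Sigma_{i,u}=\Sigma_{i,c}$, so a globally controllable event is locally controllable in each component whose alphabet contains it.

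With this in place the core is a short case split. Take $s,s'\in\overline{K}$ with $Q(s)=Q(s')$ and $\sigma\in\Sigma_c$ such that $s\sigma\in\overline{K}$ and $s'\sigma\in L$; I must show $P_i(s'\sigma)\in\overline{K_i}$ for $i=1,2$. If $\sigma\in\Sigma_i$, then $P_i(s\sigma)=P_i(s)\sigma$ and $P_i(s'\sigma)=P_i(s')\sigma$; from $s\sigma\in\overline{K}$ I get $P_i(s)\sigma\in\overline{K_i}$, from $s'\sigma\in L$ I get $P_i(s')\sigma\in L_i$, the masks agree by the identity above, and $\sigma\in\Sigma_{i,c}$, so observability of $K_i$ yields $P_i(s')\sigma\in\overline{K_i}$. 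If instead $\sigma\notin\Sigma_i$, then $P_i(s'\sigma)=P_i(s')\in\overline{K_i}$ holds trivially because $s'\in\overline{K}$. Combining $i=1,2$ gives $s'\sigma\in\overline{K_1}\parallel\overline{K_2}=\overline{K}$, which is the required conclusion.

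The genuinely load-bearing steps, and the ones I would write out most carefully, are the two identities of the second paragraph: the commutation $Q_i\circ P_i=P_i^o\circ Q$ of masks with projections, and the compatibility $\Sigma_c\cap\Sigma_i=\Sigma_{i,c}$ of the control-status partitions. These are where a careless treatment of the event and observation alphabets would silently break the reduction. Everything else is a mechanical case analysis paralleling the controllability composition of Lemma~\ref{feng}; the single place the nonconflictingness hypothesis is indispensable is the final equality $\overline{K_1}\parallel\overline{K_2}=\overline{K}$, which is what converts the componentwise conclusions back into a statement about the synchronous product itself.
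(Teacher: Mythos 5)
Your proof is correct and follows essentially the same route as the paper's: project the candidate strings onto each component, invoke observability of $K_i$ locally, and use synchronous nonconflictingness to convert the componentwise memberships $P_i(s'\sigma)\in\overline{K_i}$ back into $s'\sigma\in\overline{K_1\parallel K_2}$. Your version is in fact slightly more careful than the paper's, which leaves implicit both the case $\sigma\notin\Sigma_i$ and the two alphabet-compatibility identities ($Q_i\circ P_i = P_i^o\circ Q$ and $\Sigma_c\cap\Sigma_i=\Sigma_{i,c}$) that you rightly single out as load-bearing.
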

  \begin{proof}
    Let $s, s'\in \Sigma^*$ be such that $Q(s) = Q(s')$. Let $\sigma\in \Sigma$ and assume that $s\sigma \in \overline{K_1\parallel K_2}$, $s' \in \overline{K_1\parallel K_2}$, and $s'\sigma \in L_1\parallel L_2$. Let $P_i:\Sigma^*\to \Sigma_i^*$, for $i=1,2$. Then $P_i(s\sigma)\in \overline{K_i}$, $P_i(s')\in \overline{K_i}$, and $P_i(s'\sigma)\in L_i$ imply that $P_i(s'\sigma)\in \overline{K_i}$, by observability of $K_i$ with respect to $L_i$. Thus, $s'\sigma\in \overline{K_1}\parallel \overline{K_2} = \overline{K_1 \parallel K_2}$.
  \end{proof}

  \begin{lemma}\label{fengpo}
    Let $K_1\subseteq L_1$ over $\Sigma_1$ and $K_2\subseteq L_2$ over $\Sigma_2$ be languages such that $K_1$ is normal with respect to $L_1$ and $Q_1:\Sigma_1^*\to \Sigma_{1,o}^*$ and $K_2$ is normal with respect to $L_2$ and $Q_2:\Sigma_2^*\to \Sigma_{2,o}^*$. If $K_1$ and $K_2$ are synchronously nonconflicting, then $K_1\parallel K_2$ is normal with respect to $L_1\parallel L_2$ and $Q:(\Sigma_1\cup\Sigma_2)^*\to (\Sigma_{1,o}\cup\Sigma_{2,o})^*$.
  \end{lemma}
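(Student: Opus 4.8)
The plan is to verify the normality equation $\overline{K_1\parallel K_2} = Q^{-1}[Q(\overline{K_1\parallel K_2})]\cap (L_1\parallel L_2)$ directly, using synchronous nonconflictingness at the outset to rewrite $\overline{K_1\parallel K_2} = \overline{K_1}\parallel \overline{K_2}$ everywhere it occurs. The inclusion $\subseteq$ is immediate, since every word lies in the inverse projection of its own image under $Q$ and since $\overline{K_1}\parallel\overline{K_2}\subseteq L_1\parallel L_2$. Thus the whole argument reduces to the reverse inclusion $Q^{-1}[Q(\overline{K_1}\parallel\overline{K_2})]\cap(L_1\parallel L_2) \subseteq \overline{K_1}\parallel\overline{K_2}$.

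The key technical ingredient, which I expect to be the crux, is a commutation identity between the projections. Writing $P_i:\Sigma^*\to\Sigma_i^*$ for the projection onto the component alphabet and $R_i:\Sigma_o^*\to \Sigma_{i,o}^*$ for the projection onto the observable part of component $i$ (where $\Sigma=\Sigma_1\cup\Sigma_2$ and $\Sigma_o = \Sigma_{1,o}\cup\Sigma_{2,o}$), I claim that $Q_i\circ P_i = R_i\circ Q$ as maps on $\Sigma^*$. Both sides erase from a word every letter outside $\Sigma_{i,o}$: the left side first deletes $\Sigma\setminus\Sigma_i$ and then $\Sigma_i\setminus\Sigma_{i,o}$, while the right side first deletes $\Sigma\setminus\Sigma_o$ and then $\Sigma_o\setminus\Sigma_{i,o}$, and in both cases the surviving letters are exactly those of $\Sigma_{i,o}$ (using $\Sigma_{i,o}\subseteq\Sigma_i\cap\Sigma_o$). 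The consequence I need is that $Q_i(P_i(w))$ depends on $w$ only through $Q(w)$.

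With this identity in hand the reverse inclusion is short. Let $w$ belong to the left-hand set. Membership in $L_1\parallel L_2$ gives $P_i(w)\in L_i$ for $i=1,2$, while membership in $Q^{-1}[Q(\overline{K_1}\parallel\overline{K_2})]$ provides a witness $w'\in\overline{K_1}\parallel\overline{K_2}$ with $Q(w')=Q(w)$. Applying the commutation identity, $Q_i(P_i(w)) = R_i(Q(w)) = R_i(Q(w')) = Q_i(P_i(w'))$. Since $w'\in\overline{K_1}\parallel\overline{K_2}$ forces $P_i(w')\in\overline{K_i}$, we get $Q_i(P_i(w))\in Q_i(\overline{K_i})$, i.e.\ $P_i(w)\in Q_i^{-1}[Q_i(\overline{K_i})]$. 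Combined with $P_i(w)\in L_i$ and the normality of $K_i$, namely $\overline{K_i}=Q_i^{-1}[Q_i(\overline{K_i})]\cap L_i$, this yields $P_i(w)\in\overline{K_i}$. As this holds for both $i=1,2$ and $w\in\Sigma^*$, we conclude $w\in P_1^{-1}(\overline{K_1})\cap P_2^{-1}(\overline{K_2}) = \overline{K_1}\parallel\overline{K_2}$, which finishes the proof.

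A remark on hypotheses: I am using that $L_1$ and $L_2$ are prefix-closed (as in Lemmata~\ref{feng} and~\ref{obsComposition2}), so that $L_1\parallel L_2$ is prefix-closed and plays the role of the plant language in the normality equation; synchronous nonconflictingness is needed precisely to identify $\overline{K_1\parallel K_2}$ with $\overline{K_1}\parallel\overline{K_2}$, both in the statement to be proved and in the characterization of membership in the product via the component projections $P_1$ and $P_2$.
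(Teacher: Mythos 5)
Your proof is correct and follows essentially the same route as the paper's: the paper's one-line chain $Q^{-1}Q(\overline{K_1\parallel K_2})\cap (L_1\parallel L_2)\subseteq Q_1^{-1}Q_1(\overline{K_1})\parallel Q_2^{-1}Q_2(\overline{K_2})\parallel L_1\parallel L_2=\overline{K_1}\parallel\overline{K_2}=\overline{K_1\parallel K_2}$ is exactly your argument in set-algebraic form, with your commutation identity $Q_i\circ P_i=R_i\circ Q$ being the implicit justification of the paper's first inclusion and normality of each $K_i$ yielding the equality. Your pointwise rendering is merely a more explicit presentation of the same idea, and your closing remark about prefix-closedness is harmless since the normality equation $\overline{K_i}=Q_i^{-1}[Q_i(\overline{K_i})]\cap L_i$ already gives $\overline{K_i}\subseteq L_i$.
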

  \begin{proof}
    $Q^{-1}Q(\overline{K_1\parallel K_2}) \cap L_1\parallel L_2 \subseteq Q_1^{-1}Q_1(\overline{K_1}) \parallel Q_2^{-1}Q_2(\overline{K_2}) \parallel L_1\parallel L_2 = \overline{K_1} \parallel \overline{K_2} = \overline{K_1\parallel K_2}$. As the other inclusion always holds, the proof is complete.
  \end{proof}
  
  \begin{lemma}[\cite{Won04}]\label{lemma:Wonham}
    Let $P_k : \Sigma^*\to \Sigma_k^*$ be a projection, and let $L_i \subseteq \Sigma_i^*$, where $\Sigma_i\subseteq \Sigma$, for $i=1,2$, and $\Sigma_1\cap \Sigma_2 \subseteq \Sigma_k$. Then $P_k(L_1\parallel L_2)=P_k(L_1) \parallel P_k(L_2)$. 
  \qed
  \end{lemma}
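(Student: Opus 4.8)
The plan is to prove the two inclusions of the set equality separately, observing that only the reverse inclusion uses the hypothesis $\Sigma_1\cap\Sigma_2\subseteq\Sigma_k$. Throughout I would write $\Sigma_{k,i}=\Sigma_k\cap\Sigma_i$ and let $P_i\colon(\Sigma_1\cup\Sigma_2)^*\to\Sigma_i^*$ and $R_i\colon(\Sigma_{k,1}\cup\Sigma_{k,2})^*\to\Sigma_{k,i}^*$ be the natural projections, so that $L_1\parallel L_2=P_1^{-1}(L_1)\cap P_2^{-1}(L_2)$ and $P_k(L_1)\parallel P_k(L_2)=R_1^{-1}(P_k(L_1))\cap R_2^{-1}(P_k(L_2))$. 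The elementary fact I would use repeatedly is the compositionality of projections: since $\Sigma_{k,i}=\Sigma_k\cap\Sigma_i$, restricting a word first to $\Sigma_i$ and then to $\Sigma_k$ gives the same result as restricting it to $\Sigma_k$ and then to $\Sigma_i$, that is, $R_i\circ P_k=(\text{proj.\ to }\Sigma_{k,i})\circ P_i$ on $(\Sigma_1\cup\Sigma_2)^*$.

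For the inclusion $P_k(L_1\parallel L_2)\subseteq P_k(L_1)\parallel P_k(L_2)$, which holds unconditionally, I would take $t=P_k(s)$ with $s\in L_1\parallel L_2$. Then $P_i(s)\in L_i$, and by compositionality $R_i(t)=R_i(P_k(s))=P_k(P_i(s))\in P_k(L_i)$ for $i=1,2$; hence $t$ lies in $R_1^{-1}(P_k(L_1))\cap R_2^{-1}(P_k(L_2))$, as required.

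The substance is the reverse inclusion, and this is the step I expect to be the main obstacle. Given $t\in P_k(L_1)\parallel P_k(L_2)$, I would first lift it to witnesses, choosing $u_1\in L_1$ with $P_k(u_1)=R_1(t)$ and $u_2\in L_2$ with $P_k(u_2)=R_2(t)$. The goal is then to merge $u_1,u_2,t$ into a single word $s$ over $\Sigma_1\cup\Sigma_2$ with $P_1(s)=u_1$, $P_2(s)=u_2$, and $P_k(s)=t$; any such $s$ automatically satisfies $s\in P_1^{-1}(L_1)\cap P_2^{-1}(L_2)=L_1\parallel L_2$ and witnesses $t\in P_k(L_1\parallel L_2)$. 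This is exactly where the hypothesis enters: writing $\Sigma_s=\Sigma_1\cap\Sigma_2$, the inclusion $\Sigma_s\subseteq\Sigma_{k,i}$ yields $\pi_{\Sigma_s}(u_i)=\pi_{\Sigma_s}(P_k(u_i))=\pi_{\Sigma_s}(R_i(t))=\pi_{\Sigma_s}(t)$, where $\pi_{\Sigma_s}$ is the projection onto the shared alphabet. Thus $u_1$ and $u_2$ agree on $\Sigma_s$, and each $u_i$ agrees with $t$ on $\Sigma_{k,i}$.

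These pairwise agreements on the relevant common alphabets are precisely the compatibility conditions under which the three words admit a common shuffle, so the final step is to produce $s$. I would construct it by scanning $t$ from left to right, placing each coordinator letter in the position dictated by $t$ and, between consecutive coordinator letters, interleaving in any order the intervening private letters (those in $\Sigma_i\setminus\Sigma_k$) of $u_1$ and of $u_2$. The delicate point is to verify that this scan is total, that no word is exhausted prematurely and each is consumed exactly, which follows because $P_k(u_i)=R_i(t)$ forces the $\Sigma_k$-blocks of $u_1$ and $u_2$ to line up with the successive coordinator letters of $t$, while the agreement $\pi_{\Sigma_s}(u_1)=\pi_{\Sigma_s}(u_2)$ guarantees that shared letters are placed consistently for both words. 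Once such $s$ is produced, combining the two inclusions gives $P_k(L_1\parallel L_2)=P_k(L_1)\parallel P_k(L_2)$.
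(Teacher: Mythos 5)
Your proof is correct, and in fact the paper offers no argument of its own to compare against: the lemma is imported from Wonham's notes with a bare citation, so your self-contained derivation is exactly what a reader would want. The easy inclusion via compositionality of projections and the lifting-and-merging argument for the reverse inclusion are both sound, and you correctly locate the use of the hypothesis $\Sigma_1\cap\Sigma_2\subseteq\Sigma_k$ in the merge step. One caution about how you justify that step: the sentence asserting that the pairwise agreements are ``precisely the compatibility conditions under which the three words admit a common shuffle'' invokes a general principle that is false. For three words, pairwise consistency of projections does not imply the existence of a joint word: take $u_1=ab$ over $\{a,b\}$, $u_2=bc$ over $\{b,c\}$, and $t=ca$ over $\{a,c\}$; all three pairwise projections onto the shared singletons agree, yet the induced ordering constraints $a<b$, $b<c$, $c<a$ are cyclic and no merge exists. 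What rescues your argument is the special position of $t$: because $\Sigma_s=\Sigma_1\cap\Sigma_2\subseteq\Sigma_k$, every pairwise intersection of the three relevant alphabets (namely $\Sigma_{k,1}$, $\Sigma_{k,2}$, and $\Sigma_s\subseteq\Sigma_{k,1}\cap\Sigma_{k,2}$) lies inside the alphabet of $t$, so $t$ already linearizes all letters over which two of the words could conflict, and the remaining letters are private to exactly one $u_i$ and order-unconstrained relative to everything outside $\Sigma_i$. Your left-to-right scan with block alignment via $P_k(u_i)=R_i(t)$ exploits exactly this structure and does go through; I would only make that point explicit rather than appeal to the (invalid) general shuffle principle. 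A small further remark: the agreement $\pi_{\Sigma_s}(u_1)=\pi_{\Sigma_s}(u_2)$ you derive is actually redundant in the construction, since both sides equal $\pi_{\Sigma_s}(t)$ and the placement of shared letters is dictated by $t$ alone.
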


  \begin{lemma}[\cite{pcl06}]\label{obsComposition}
    For $i\in J$, let $L_i \subseteq \Sigma_i^*$ be a language, and let $\cup_{k,\ell\in J}^{k\neq\ell} (\Sigma_k\cap \Sigma_\ell)\subseteq \Sigma_0 \subseteq (\cup_{i\in J} \Sigma_i)^*$. If $P_{i,0}:\Sigma_i^* \to (\Sigma_i\cap \Sigma_0)^*$ is an $L_i$-observer, for $i\in J$, then $P_{0}:(\cup_{i\in J} \Sigma_i)^* \to \Sigma_0^*$ is an $(\parallel_{i\in J} L_i)$-observer. 
    \qed
  \end{lemma}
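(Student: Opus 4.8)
The plan is to check the observer condition for $P_0$ directly, by pushing it down to the local observer conditions and then reassembling a global extension from the local ones. Write $L=\parallel_{i\in J}L_i$ and $\Sigma=\cup_{i\in J}\Sigma_i$, let $P_i:\Sigma^*\to\Sigma_i^*$ be the projection onto the $i$-th alphabet, and let $P_{0,i}:\Sigma_0^*\to(\Sigma_i\cap\Sigma_0)^*$ be the projection of the observable alphabet onto its $i$-th part. Two elementary facts will be used throughout: the square commutes, $P_{i,0}\circ P_i=P_{0,i}\circ P_0$ (both compute the projection onto $\Sigma_i\cap\Sigma_0$), and a word belongs to $L$ precisely when each of its local projections $P_i(w)$ belongs to $L_i$.

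First I would localize the hypothesis. Fix $t\in P_0(L)$ and $s\in\overline{L}$ with $P_0(s)$ a prefix of $t$, and put $t_i:=P_{0,i}(t)$ and $s_i:=P_i(s)$. Since $s\in\overline{L}$, each $s_i\in\overline{L_i}$; the commutation gives that $P_{i,0}(s_i)=P_{0,i}(P_0(s))$ is a prefix of $P_{0,i}(t)=t_i$; and $t_i\in P_{i,0}(L_i)$ because a witness $w\in L$ for $t$ satisfies $P_i(w)\in L_i$ and $P_{i,0}(P_i(w))=t_i$. Applying the $L_i$-observer property of $P_{i,0}$ for every $i\in J$ then produces words $u_i\in\Sigma_i^*$ with $s_iu_i\in L_i$ and $P_{i,0}(s_iu_i)=t_i$. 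Writing $t=P_0(s)\,t'$, this says $P_{i,0}(u_i)=P_{0,i}(t')$, so the observable letters of $u_i$ are exactly the $\Sigma_i$-subsequence of the common suffix $t'$.

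The crux is to interleave the $u_i$ into one global word $u$ with $P_i(u)=u_i$ for all $i$ and $P_0(u)=t'$. Here the alphabet hypothesis $\cup_{k\neq\ell}(\Sigma_k\cap\Sigma_\ell)\subseteq\Sigma_0$ is decisive, and I would isolate it as the key observation: any event of $\Sigma_i\setminus\Sigma_0$ lies in no other $\Sigma_j$, so the private letters of distinct components are mutually invisible and may be shuffled freely, whereas every letter shared by two components is observable and therefore already recorded in $t'$. I would then build $u$ by scanning $t'$ from left to right: before emitting an observable letter $\sigma$ of $t'$, output, for each $i$ with $\sigma\in\Sigma_i$, the as-yet-unused private letters of $u_i$ that precede the matching occurrence of $\sigma$ in $u_i$, then emit $\sigma$ once, synchronized across all components containing it; after $t'$ is exhausted, append the trailing private letters of every $u_i$. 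Because all the $u_i$ project to the same observable sequence $t'$ restricted to their alphabets, the matching of occurrences is order-preserving and the schedule never stalls, and by construction the letters of $u$ lying in $\Sigma_i$ reproduce $u_i$ in order.

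Finally I would read off the conclusion: $P_i(su)=s_iu_i\in L_i$ for every $i$ yields $su\in L$, while $P_0(su)=P_0(s)\,t'=t$, so $P_0$ is an $L$-observer. The one delicate step is the interleaving in the third paragraph, and its correctness rests squarely on the separation of private from shared events furnished by the condition on $\Sigma_0$; for a fully rigorous write-up I would either formalize the scheduling as a short induction on $|t'|$, peeling off one observable letter (together with the private letters preceding it in each component) at a time, or verify directly that the projection of the scheduled word onto each $\Sigma_i$ equals $u_i$.
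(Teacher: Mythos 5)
Your proof is correct. Note, however, that the paper does not prove Lemma~\ref{obsComposition} at all: it is imported from~\cite{pcl06} and stated without proof, so there is no internal argument to compare against; yours is a valid self-contained derivation in the spirit of the standard string-lifting proofs of observer composition. The localization step is sound: the square $P_{i,0}\circ P_i=P_{0,i}\circ P_0$ does commute, projections preserve the prefix order, $s\in\overline{L}$ gives $P_i(s)\in\overline{L_i}$, and a witness $w\in L$ with $P_0(w)=t$ gives $t_i\in P_{i,0}(L_i)$. The interleaving is indeed the crux, and you identify exactly the right mechanism: every letter of $u_i$ outside $\Sigma_0$ lies in $\Sigma_i\setminus\Sigma_0$, and the hypothesis $\Sigma_k\cap\Sigma_\ell\subseteq\Sigma_0$ for $k\neq\ell$ makes these private blocks alphabet-disjoint across components, so they commute freely, while every occurrence of an observable letter of $u_i$ is matched, order-preservingly, to an occurrence in $t'$ because $P_{i,0}(u_i)=P_{0,i}(t')$. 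Emitting each letter of $t'$ once, preceded by the pending private blocks of all components containing it, then gives $P_i(u)=u_i$ for all $i$ and $P_0(u)=t'$, whence $su\in L=\bigcap_{i}P_i^{-1}(L_i)$ and $P_0(su)=t$.

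Two small caveats for a polished write-up. First, the construction tacitly requires $J$ to be finite (otherwise $u$ need not be a finite word); this is implicit in the lemma's intended use (finitely many components; in this paper $|J|\le 3$), but you should say so. Second, as you yourself flag, the claim that the schedule ``never stalls'' is the one assertion not verified letter-by-letter; the clean fix is the induction on $|t'|$ you sketch, peeling off the first observable letter of $t'$ together with the private prefix of each $u_i$ preceding its matched occurrence, which reduces the problem to strictly shorter residuals satisfying the same invariant. With that induction written out, the argument is complete.
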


\section*{Acknowledgements}
  The research was supported by RVO 67985840, by the Czech Ministry of Education in project MUSIC (grant LH13012), and by the DFG in project DIAMOND (Emmy Noether grant KR~4381/1-1).
 
\bibliographystyle{model1-num-names}
\bibliography{revision2}

\end{document}